\newtheorem{definition}{Definition}
\numberwithin{definition}{section}
\numberwithin{equation}{section}
\newtheorem{theorem}[definition]{Theorem}
\newtheorem{lemma}[definition]{Lemma}
\theoremstyle{definition}
\newtheorem{example}[definition]{Example}
\newtheorem{assumption}[definition]{Assumption}
\newtheorem{remark}[definition]{Remark}
\Crefname{assumption}{Assumption}{Assumptions}
\newlist{enumthm}{enumerate}{1}
\setlist[enumthm]{label=\textup{\arabic*.},ref=\theassumption.\textup{\arabic*}}
\newcommand{\bsp}{\Lambda}
\newcommand{\coperator}{B}
\newcommand{\dualp}[3][]{\langle #2, #3 \rangle_{{#1}^*\!, #1}}
\newcommand{\dualpHzeroone}[3][]
{\langle #2, #3 \rangle_{H^{-1}(\domain), H_0^1(\domain)}}
\newcommand{\spL}[2]{\mathscr{L}(#1, #2)}
\newcommand{\functionArgument}[1]{\ifthenelse{\equal{#1}{}}  %if #1={} then print nothing, else (#1)
	{}
	{({#1})}
}
\newcommand{\norm}[2][2]{\|#2\|_{#1}}
\newcommand{\rdc}{\kappa}
\newcommand{\rrhs}{b}
\newcommand{\inner}[3][]{( #2, #3 )_{#1}}
\newcommand{\obj}{f}
\newcommand{\pobj}{F}
\newcommand{\pttobj}{J}
\newcommand{\rpobj}{\pobj}
\newcommand{\dist}[2]{\mathrm{dist}\functionArgument{#1,#2}}
\newcommand{\deviation}[2]{\mathbb{D}\functionArgument{#1,#2}}
\newcommand{\csp}{U}
\newcommand{\adcsp}{\csp_\text{ad}}
\newcommand{\hsp}{H}
\newcommand{\domain}{D}
\newcommand{\ub}{\mathfrak{u}}
\newcommand{\real}{\mathbb{R}}
\newcommand{\embedding}{\xhookrightarrow{}}
\newcommand{\du}{\ensuremath{\mathrm{d}}}
\newcommand{\Du}{\ensuremath{\mathrm{D}}}
\newcommand{\wpone}{w.p.~$1$}
\newcommand{\frechet}{Fr\'echet}
\newcommand{\hoelder}{H\"older}
\newcommand{\friedrichs}{Friedrichs}
\newcommand{\maxo}[1]{(#1)_+}
\newcommand{\dom}[1]{\mathrm{dom}(#1)}
\newcommand{\wto}{\rightharpoonup}
\DeclareOldFontCommand{\sc}{\normalfont\scshape}{\@nomath\sc}
\newcommand{\funArg}[1]{\ifthenelse{\equal{#1}{}}  %if #1={} then print nothing, else (#1)
	{}
	{[#1]}
}
\newcommand{\AVaR}[2][]{\mathrm{AVaR}_\beta\funArg{#2}}
\newcommand{\mcAVaR}[2][]{\widehat{\mathrm{AVaR}}_{\beta,N}\funArg{#2}}
\renewcommand{\abstract}[1]
{
	{\small
		\textbf{Abstract.} {#1}
		\\
	}
}
\newcommand{\keywords}[1]
{
	{\small
		\textbf{Key words.} {#1}
		\\
	}
}
\newcommand{\amssubject}[1]
{
	{\small
		\textbf{AMS subject classifications.} {#1}
	}
}
\title{Consistency of sample-based stationary points 
	for infinite-dimensional stochastic optimization}
\author{Johannes Milz%
        \thanks{H.\ Milton Stewart School of Industrial and Systems Engineering, Georgia Institute of Technology, Atlanta, GA 30332,     USA        \texttt{johannes.milz@isye.gatech.edu}.}}
\date{June 19, 2023}
\begin{document}

\maketitle

\abstract{%
We consider stochastic optimization problems 
with possibly nonsmooth integrands posed in  Banach spaces and 
approximate these stochastic programs via a 
sample-based approaches. 
We establish the consistency 
of approximate Clarke stationary points of the sample-based approximations.
Our framework is applied to risk-averse semilinear PDE-constrained
optimization
using the average value-at-risk and to  risk-neutral bilinear
PDE-constrained
optimization.
}

\par
\keywords{%
	  stochastic programming, 
	sample average approximation, 
	optimization under uncertainty,
	PDE-constrained optimization,
	uncertainty quantification,
	bilinear optimal control}
\par
\amssubject{%
	 65C05, 90C15, 35R60, 90C48, 90C30, 60H25, 49M41, 35Q93
}

\section{Introduction}
\label{sec:intro}
Infinite-dimensional  optimization problems
arise in a plethora of research fields such as dynamic programming
\cite{Langen1981}, statistical estimation \cite{Gine2016},  
feedback stabilization of dynamical systems \cite{Kunisch2020}, and
optimization problems governed by partial 
differential equations (PDEs)  \cite{Kouri2018a}.
PDE-constrained optimization  is an active research field with 
a focus on modeling, analyzing and solving  complex optimization
problems with PDE constraints. For example, 
numerous applications in the field of renewable  and sustainable energy yield
challenging PDE-constrained optimization problems,
such as wind plant layout optimization \cite{King2017}, 
wind turbine blade planform design \cite{Allen2022}, 
and tidal stream array optimization \cite{Goss2021}. 
Parameters in PDEs may be uncertain, such as 
diffusion coefficients and boundary
conditions, and as such can have a significant
influence on the simulation output. 
Using parameter-dependent 
PDEs for decision making naturally
leads to decision making under uncertainty. 
A common approach in optimization
under uncertainty models uncertain parameters as random 
variables with known
probability distribution. Using this stochastic programming approach, 
we can formulate an optimization problem having solutions performing best
on average; its  objective function is 
the expected value of a parameterized cost function, the integrand. 
Such optimization problems are nowadays often referred to as
risk-neutral problems to distinguish them from risk-averse
formulations. Since high-dimensional integrals
cannot be accurately evaluated and solving PDEs requires 
numerical computations, 
we approximate the expectations using a Monte Carlo sample-based 
approximation,
the sample average approximation (SAA) approach 
\cite{Kleywegt2002,Shapiro2003,Shapiro2007}, 
and other schemes  approximating the true probability distribution
via weakly convergent probability distributions.
In this manuscript, we demonstrate the  consistency of 
Clarke-(C-)stationary points
of the approximated problems
towards the risk-neutral problem's set of C-stationary points.

Motivated by infinite-dimensional optimization problems governed by
complex physical systems under uncertainty, 
we consider the risk-neutral composite  optimization problem
\begin{align}
	\label{eq:ocp}
	\min_{x \in X}\,  \int_{\Xi}\, \pobj_{\xi}(x) \, \du \mathbb{P}(\xi)
	+ \psi(x),
\end{align}
where $X$ is a reflexive Banach space, 
$\psi \colon X \to (-\infty,\infty]$ is a proper, convex, lower semicontinuous
function having the so-called Kadec property, 
$\mathbb{P}$ is a probability measure on the complete, 
separable metric space $\Xi$, and
$\pobj_{\xi}$ is a locally Lipschitz continuous integrand with
integrable Lipschitz constants. 

In this manuscript, we consider two types of approximations of the
expectations in \eqref{eq:ocp}. While our main focus
is on analyzing the SAA approach
\cite{Kleywegt2002,Shapiro2003}, we also consider
approximations of $\mathbb{P}$ via weakly converging probability
measures \cite{Langen1981,Feinberg2022}.
To introduce, the 
SAA approach, 
let $\xi^1$, $\xi^2, \ldots$ be independent identically distributed
$\Xi$-valued random elements such that each $\xi^i$ has
distribution $\mathbb{P}$.
The SAA problem with sample size  $N \in \mathbb{N}$ 
corresponding
to \eqref{eq:ocp} is given by
\begin{align}
	\label{eq:saa}
	\min_{x \in X}\,  \frac{1}{N} \sum_{i=1}^N \pobj_{\xi^i}(x) + \psi(x).
\end{align}

The principle contributions of this manuscript are twofold.
\begin{enumerate}[leftmargin=*]
	%[align=right,itemindent=2em,labelsep=2pt,labelwidth=1em,leftmargin=0pt,nosep]
	\item[(i)] 
	We demonstrate the consistency of approximate
	C-stationary points of the SAA problem \eqref{eq:saa} for a class of
	risk-neutral nonlinear optimization problems \eqref{eq:ocp}
	by adapting and extending a framework developed in \cite{Milz2022}, 
	where the consistency of SAA optimal values and SAA solutions 
	has been demonstrated. 
	More specifically, our analysis relies on the identity
	\begin{align}
		\label{eq:intro:subdifferentialidentity}
		\partial_C \pobj_{\xi}(x) = \coperator  M_\xi(x)
		\quad \text{for all} \quad (x,\xi) \in X \times \Xi,
	\end{align} 
	where $\partial_C \pobj_{\xi}(x) $ is Clarke's subdifferential
	of $\pobj_{\xi}$ at $x$,
	$V$ is a Banach space, $\coperator  \colon V \to X^*$
	is linear and compact, and 
	$M_\xi \colon X   \rightrightarrows V$ is a multifunction
	for each $\xi \in \Xi$.
	Moreover, using proof techniques developed in \cite{Milz2022a}, 
	we establish consistency statements for the significantly larger class of
	functions $\psi$ than those considered in
	\cite{Milz2022}, so-called functions having the Kadec property
	\cite{Borwein1994}.

	\item[(ii)]  We apply our framework to 
	risk-averse semilinear PDE-constrained optimization problems
	using the average value-at-risk 
	and to steady-state elliptic 
	bilinear optimal control problems under uncertainty. 
	While the literature on risk-neutral
	and risk-averse semilinear PDE-constrained
	optimization is extensive 
	\cite{Geiersbach2020,Kouri2016,Kouri2020,Milz2022b}, 
	bilinear PDE-constrained problems under uncertainty have not
	been analyzed, but serve as yet another class of  nonconvex
	problems and arise in many applications.
\end{enumerate}

The consistency of SAA solutions and SAA stationary points 
of nonlinear possibly
infinite-dimensional stochastic programs are typically 
based on some form of the feasible set's compactness.
In particular, 
the consistency results for SAA stationary points of infinite-dimensional
stochastic programs established in \cite{Balaji2008,Teran2010}
require the feasible set's compactness.
However, compact sets in infinite dimensions may regarded
as rare phenomena, given the fact that closed unit balls in Banach spaces
are compact if and only if the spaces are finite-dimensional.
This fact complicates the consistency analysis 
for nonconvex stochastic programs.
For strongly convex risk-neutral PDE-constrained optimization, 
the SAA approach has been analyzed in
\cite{Hoffhues2020,Martin2021,Roemisch2021,Milz2021,Milz2022c}
without requiring the compactness of the feasible set.

Contributions to analyzing the SAA approach for optimization problems
governed  by 
nonlinear operator equations with random inputs are relatively recent.
The SAA objective function's almost sure  
epiconvergence  and the SAA stationary points'  weak consistency
is analyzed in \cite{Phelps2016} for optimization problems governed by 
ordinary differential equations with random inputs. 
Utilizing epiconvergence, performance guarantees of optimal values 
of PDE-constrained optimization problems
under uncertainty are provided in \cite{Chen2021}
with respect to sample average approximations 
and control and state space discretizations, for example.
The asymptotic consistency of SAA optimal values and SAA solutions
to risk-averse PDE-constrained optimization problems has recently been
demonstrated in \cite{Milz2022a} using (Mosco-)epiconvergence,
and sample size estimates for risk-neutral semilinear PDE-constrained
problems are derived in \cite{Milz2022b}.

We use the compact operator $\coperator$ in
\eqref{eq:intro:subdifferentialidentity} to mathematically model problem
structure typically found  in  PDE-constrained optimization problems.
Related problem characteristics are 
used for establishing error estimates for finite-dimensional
approximations to PDE-constrained optimization problems
\cite{Casas2012a,Falk1973,Kroener2009} and
for algorithmic design and analysis in function space
\cite{Hertlein2019,Hintermueller2004,Hintermueller2016,Ulbrich2011},
for example.

The analysis of bilinear PDE-constrained optimization problems
is generally more 
challenging than those of their linear counterparts, as bilinear PDEs may lack
solutions for each point in the control space and bilinear
control problems are  generally nonconvex.
Bilinear control problems can be governed by
steady-state elliptic PDEs \cite{Kroener2009,Vallejos2008}, 
convection-diffusion equations \cite{Borzi2016},
and advection-reaction-diffusion
equations \cite{Glowinski2022}, for instance.
A substantial body of literature is available on the analysis,
algorithmic design, and applications of
deterministic bilinear control problems.
We refer the reader to
\cite{Casas2018a,Glowinski2022,Guillen-Gonzalez2020a}
for the analysis of deterministic bilinear control problems.
Finite element error estimates are established in
\cite{Casas2018a,Fuica2022,Kroener2009,Winkler2020,Yang2008} 
and algorithms are designed in
\cite{Borzi2016,Glowinski2022,Kahlbacher2012,Keung2000,%
	Kunisch1997,Vallejos2008}.
Bilinear control problems arise in medical image analysis \cite{Mang2018},
damping design \cite{Stojanovic1991}, 
groundwater remediation system design \cite{Guan1999},
optical flow \cite{Jarde2019}, 
PDE-constrained regression problems \cite{Nickl2020}, and 
chemorepulsion production \cite{Guillen-Gonzalez2020a}, for example.

\section*{Outline}
We introduce notation and terminology in \cref{sec:notation}.
\Cref{sec:consistentgeneral} establishes consistency of 
C-stationary points of optimization
problems obtained through approximations of infinite-dimensional nonconvex
optimization posed in reflexive Banach spaces. This basic result
is used to establish consistency of SAA C-stationary points
in \cref{sec:mcsaa} and of C-stationary points of approximate
stochastic programs defined
by weakly convergent probability measures in 
\cref{sec:wcsaa}. 
In  \cref{sect:avar}, we apply the framework developed in
\cref{sec:mcsaa} to risk-averse semilinear PDE-constrained
optimization using the average value-at-risk.
\Cref{subsec:blocp} discusses the application of our theory
to a risk-neutral bilinear PDE-constrained optimization problem.
We summarize our contributions and 
discuss open research questions  in \cref{sec:conclusion}.

\section{Notation and preliminaries}
\label{sec:notation}
Metric spaces are defined over the real numbers
and equipped with their Borel sigma-algebra
if not specified otherwise. 
For a Hilbert space $\hsp$, 
we denote by $\inner[\hsp]{\cdot}{\cdot}$ its inner product.
Let $X_1$ and $X_2$ be  Banach spaces.
The space of linear, bounded operators
from $X_1$ to $X_2$ is denoted by $\spL{X_1}{X_2}$.
An operator $A \in \spL{X_1}{X_2}$ is compact if
the image $A(W)$ is precompact in $X_2$
for each bounded set $W \subset X_1$.
The operator 
$A^* \in \spL{X_2^*}{X_1^*}$ is the adjoint operator
of $A \in \spL{X_1}{X_2}$.
We denote by $\Du \obj$ the \frechet\ derivative
of $\obj$ and by $\Du_y \obj $ or $\obj_y$ the \frechet\ derivative
with respect to $y$.
The dual  to a Banach space $\bsp$ is 
$\bsp^*$ and we use $\dualp[\bsp]{\cdot}{\cdot}$ 
to denote 
the dual pairing  between $\bsp^*$ and $\bsp$.
Let $X_0 \subset X$ be an open, nonempty subset
of a Banach space $X$. 
We denote Clarke's generalized directional derivative
of a locally Lipschitz continuous mapping $f \colon X_0 \to \real$
at $x \in X_0$ in the direction $h \in X$
by $f^\circ(x;h)$ 
and Clarke's subdifferential at $x$ by $\partial_C f(x) \subset X^*$
\cite[pp.\ 25 and 27]{Clarke1990}. We refer the reader to 
\cite[p.\ 39]{Clarke1990} for the definition of Carke regularity
of a function $f$. 
We use $\partial f$ to denote the usual convex subdifferential of
a function $f$. We define
the distance $\dist{v}{\Upsilon}$ from $v \in \Lambda
\subset X$ to $\Upsilon \subset X$ by
\begin{align*}
	\dist{v}{\Upsilon} = \inf_{w\in \Upsilon}\, \norm[X]{v-w}
	\quad \text{if} \quad   \Upsilon \neq \emptyset, 
	\quad \text{and} \quad  \dist{v}{\Upsilon} = \infty
	\quad \text{otherwise}. 
\end{align*} 
and the deviation $\deviation{\Lambda}{\Upsilon}$ between the sets 
$\Lambda$ and $\Upsilon$ by
$\deviation{\Lambda}{\Upsilon} = \sup_{v\in \Lambda}\, 	\dist{v}{\Upsilon}$.
Here $\norm[X]{\cdot}$ is a norm of $X$.
If $\phi \colon X \to (-\infty, \infty]$, then  
$\dom{\phi} = \{x \in X \colon \phi(x) < \infty\}$
denotes its domain.
Following \cite[p.\ 147]{Borwein1994}
and \cite[p.\ 306]{Borwein2010}, we say that a proper function
$\phi \colon X \to (-\infty, \infty]$
has the Kadec property if for each
$\bar x \in \dom{\phi}$, we have $x_k \to \bar x$
whenever $\phi(x_k) \to \phi(\bar x)$, $(x_k) \subset \dom{\psi}$, 
and $x_k \wto  \bar x$. Here $\to$ denotes strong convergence
and $\wto$ denotes weak convergence. 
For $x \in X$ and $\varepsilon \geq 0$, 
we denote by $\mathbb{B}_{X}(x;\varepsilon)$ the open ball about 
$x$ in $X$ with radius $\varepsilon$
and by $\overline{\mathbb{B}}_{X}(x;\varepsilon)$ its closure in $X$.

The following notation is used in \cref{subsec:blocp,sect:avar}.
Let $\domain \subset \real^d$ be a bounded domain.
The space $C^{0,1}(\bar{\domain})$ is the set of Lipschitz continuous
functions on $\bar{\domain}$, $L^p(\domain)$ ($1 \leq p \leq \infty$)
and $H^1(\domain)$, $H^2(\domain)$, and $H_0^1(\domain)$
are the ``usual'' Lebesgue and Sobolev spaces, respectively.
The space $H_0^1(\domain)$ is equipped with the norm
$\norm[H_0^1(\domain)]{v} = \norm[L^2(\domain)^d]{\nabla v}$
and the dual to $H_0^1(\domain)$ is denoted by $H^{-1}(\domain)$.
Moreover, let $C_\domain \in (0,\infty)$ be
\friedrichs ' constant of
$\domain$.
If $X$ is a  reflexive Banach space, then 
we identify  $(X^*)^*$ with $X$ and write $(X^*)^* = X$.
Moreover, we identify $L^2(\domain)^*$ with $L^2(\domain)$
and write $L^2(\domain)^*=L^2(\domain)$.
We denote by $\iota_0 \colon H_0^1(\domain) \to L^2(\domain)$
the embedding operator of the compact embedding
$H_0^1(\domain) \embedding L^2(\domain)$
and by $\iota_1 \colon H^1(\domain) \to L^2(\domain)$
that of 
$H^1(\domain) \embedding L^2(\domain)$.
We introduce further application-specific notation in \cref{subsec:blocp}.

\section{Consistency of C-stationary points}
\label{sec:consistentgeneral}

We consider the optimization problem 
\begin{align}
	\label{eq:probf}
	\min_{x \in \dom{\psi}}\, f(x) + \psi(x)
\end{align}
and its approximations
\begin{align}
	\label{eq:probfk}
	\min_{x \in \dom{\psi}}\, f_k(x) + \psi(x).
\end{align}
We explicitly add $\dom{\psi}$ as a constraint sets
in \eqref{eq:probf} and \eqref{eq:probfk}, as 
$f$ and $f_k$ may only be finite-valued on
an open neighborhood of $\dom{\psi}$.
We provide conditions sufficient for the asymptotic consistency
of C-stationary points of  the approximated problems \eqref{eq:probfk} to 
those of ``true'' problem \eqref{eq:probf}.  Our analysis is inspired by those
in \cite{Shapiro2003,Shapiro2007,Chen2012a,Milz2022a} 
and based on technical assumptions used in \cite{Milz2022}.
Using these consistency results, 
we establish asymptotic consistency of  C-stationary
points  of infinite-dimensional stochastic programs in 
\cref{sec:mcsaa,sec:wcsaa}.

\begin{assumption}
	\label{assumption:consistency}
	\begin{enumthm}[leftmargin=*,nosep]
		\item 
		\label{assumption:consistency:bspaces}
		The spaces $X$ and $V$ are   Banach spaces, 
		$X$ is reflexive, and $X_0 \subset X$ is nonempty and open.
		\item 
		\label{assumption:consistency:kadec}
		The function $\psi \colon X \to (-\infty, \infty]$ is proper, 
		convex, and lower semicontinuous. Moreover, $\psi$ 
		has the Kadec property  and $\dom{\psi} \subset X_0$.
		\item 
		\label{assumption:consistency:lipschitz}
		The functions $f \colon X_0 \to \real$ 
		and  $f _k \colon X_0 \to \real$, 
		$k \in \mathbb{N}$, are locally Lipschitz continuous.
		\item 
		\label{assumption:consistency:subdifferentialcompactness}
		The operator $\coperator  \in \spL{V}{X^*}$ is compact, 
		$M_k \colon \dom{\psi} \rightrightarrows V$
		is a set-valued mapping,  and $\partial_C f_k(x)
		= \coperator M_k(x)$ for all $x \in \dom{\psi}$ and 
		$k \in \mathbb{N}$.
		\item 
		\label{assumption:consistency:subdifferentialboundedness}
		For each $\bar x \in \dom{\psi}$, 
		each sequence $(x_k) \subset \dom{\psi}$
		with $ x_k \wto \bar x$, 
		and each sequence $(v_k)$ with $v_k \in M_k(x_k)$
		for all $k \in \mathbb{N}$, 
		$
		\limsup_{k \to \infty} \, \norm[V]{v_k}
		< \infty
		$.
		\item 
		\label{assumption:consistency:generalizedderivativeconsistency}
		For each $h \in X$, 
		each $\bar x \in \dom{\psi}$,  and each sequence $(x_k) \subset \dom{\psi}$
		with $x_k \to \bar x$, 
		it holds that
		$\limsup_{k \to \infty} f_k^\circ(x_k; h) \leq f^\circ(\bar x;h)$.
	\end{enumthm}
\end{assumption}

Under 
\Cref{assumption:consistency:bspaces,assumption:consistency:lipschitz,%
	assumption:consistency:kadec},
Clarke's subdifferential $\partial_C f_k(x)$
for each $x \in X_0$
is nonempty, convex, bounded, and weakly$^*$-compact
\cite[Prop.\ 2.1.2]{Clarke1990}.
Hence the set-valued mapping  $M_k$ in 
\Cref{assumption:consistency:subdifferentialcompactness} is
nonempty-valued.
If  $M_k$ has bounded images, then
\Cref{assumption:consistency:subdifferentialcompactness} 
ensures that $\partial_C f_k(x)$ is precompact-valued.
For Monte Carlo sample-based approximations of stochastic
programs, we verify
\Cref{assumption:consistency:generalizedderivativeconsistency} 
using an epigraphical law of large numbers \cite{Artstein1995}, 
basic properties of  Clarke's generalized directional derivative, 
and by imposing Clarke regularity of integrands
studied in \cref{sec:mcsaa}.

\Cref{assumption:consistency:subdifferentialcompactness} is satisfied 
for large problem classes as demonstrated next.

\begin{example}
	Let \Cref{assumption:consistency:bspaces,assumption:consistency:kadec,%
		assumption:consistency:lipschitz} hold.
	If $W$ is a  Banach space, 
	$\iota  \in \spL{X}{W}$ 
	is compact, $F_k \colon W \to \real$ is locally Lipschitz continuous
	and Clarke regular, 
	and $f_k \colon X \to \real$ is defined by $f_k(x) = F_k(\iota x)$, 
	then $\partial_C f_k(x) = \iota^* \partial_C F_k(\iota x)$
	\cite[Thm.\ 2.3.10]{Clarke1990}, and we may choose $B = \iota^*$
	and $M_k(x)  = \partial_C F_k(\iota x)$ 
	to satisfy \Cref{assumption:consistency:subdifferentialcompactness}.
	Composite objective functions
	with compact linear operators
	arise in nonsmooth optimal control
	\cite{Hertlein2022,Hertlein2019,Hintermueller2016,Surowiec2018}, 
	for example.
\end{example}

Next we discuss \Cref{assumption:consistency:subdifferentialboundedness,%
	assumption:consistency:subdifferentialcompactness}
when $X = \real^n$.

\begin{remark}
	Let \Cref{assumption:consistency:bspaces,assumption:consistency:kadec,%
		assumption:consistency:lipschitz} hold.
	Let  $X = \real^n$ 
	(equipped with the Euclidean norm $\norm[2]{\cdot}$)
	and let $X^* = V$. We identify  the dual to $\real^n$ with $\real^n$. Let
	$B \colon \real^n \to \real^n$ be the identity mapping.
	Then \Cref{assumption:consistency:subdifferentialcompactness} holds true
	with $ M_k = \partial_C f_k$.
	Next we discuss \Cref{assumption:consistency:subdifferentialboundedness}.
	Let $\bar x \in \dom{\psi}$ and let $L_k \geq 0$
	be Lipschitz constant of $f_k$
	on  $\mathbb{B}_{X}(\bar x;\varepsilon)$,
	where $\varepsilon = \varepsilon(\bar x)> 0$.
	If $\limsup_{k \to \infty}\, L_k < \infty$, 
	then \Cref{assumption:consistency:subdifferentialboundedness} holds true.
	Let us verify this assertion. 
	Let $(x_k) \subset \dom{\psi}$
	with $ x_k \wto \bar x$, 
	and let $(v_k)$ with $v_k \in M_k(x_k)$
	for all $k \in \mathbb{N}$. 
	Then $x_k \to \bar x$ and
	there exists $K = K(\bar x)$
	such that $x_k \in \mathbb{B}_{X}(\bar x;\varepsilon)$ for all $k \geq K$.
	Since $\partial_C f_k(x_k)  = M_k(x_k)$, 
	\cite[Prop.\ 2.1.2]{Clarke1990} ensures
	$\norm[2]{v_k} \leq L_k$ for all $k \geq K$.
	Hence 
	$
	\limsup_{k \to \infty} \, \norm[2]{v_k}
	< \infty
	$.
\end{remark}

\Cref{thm:genericconsistency} establishes consistency
of approximate C-stationary points. 
Let  $\mathcal{C}$ be  the set of 
C-stationary points of \eqref{eq:probf}, 
that is, $\mathcal{C}$ is
the set of all points
$x \in \dom{\psi}$ with $0 \in \partial_C f(x) + \partial \psi(x)$.
Let $\varepsilon \geq 0$ and let 
$\mathcal{C}_k^{\varepsilon}$
be  the set of all points $x \in \dom{\psi}$ with 
$0 \in  \bigcup_{ y \in \overline{\mathbb{B}}_{X}(x;\varepsilon) 
	\cap\dom{\psi}} \partial_C f_k(y) + \partial \psi(x)$
(cf.\ \cite[eqns.\ (3.2) and (4.2)]{Shapiro2007}).
Each point in $\mathcal{C}_k^{\varepsilon}$ is referred to as
an approximate C-stationary point of \eqref{eq:probfk}.

\begin{theorem}
	\label{thm:genericconsistency}
	Let \Cref{assumption:consistency} hold
	and let $(\varepsilon_k) \subset [0,\infty)$ satisfy $\varepsilon_k \to 0$
	as $k \to \infty$.	
	Moreover, let $\mathcal{C}_k^{\varepsilon_k}$ 
	be nonempty for each $k \in \mathbb{N}$. 
	\begin{enumerate}
		\item[\textnormal{(i)}]  If $(x_k)$ is a bounded sequence with
		$x_k \in \mathcal{C}_k^{\varepsilon_k}$  
		for each $k \in \mathbb{N}$, then $\mathcal{C}$ is nonempty
		and $\dist{x_k}{\mathcal{C}} \to 0$ as $k \to \infty$.
		\item[\textnormal{(ii)}] If there exists a bounded
		set $\mathscr{C} \subset X$
		with $\mathcal{C}_k^{\varepsilon_k}\subset \mathscr{C}$
		for all sufficiently large $k \in \mathbb{N}$, then
		$\deviation{\mathcal{C}_k^{\varepsilon_k}}{\mathcal{C}} \to 0$
		as $k \to \infty$.
	\end{enumerate}
\end{theorem}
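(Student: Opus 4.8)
The plan is to prove a single underlying compactness principle and then derive both parts from it: \emph{every bounded sequence $(x_k)$ with $x_k \in \mathcal{C}_k^{\varepsilon_k}$ admits a subsequence converging strongly to a point of $\mathcal{C}$}. Given this, part~(i) follows immediately (the existence of such a strong limit shows $\mathcal{C} \neq \emptyset$, and if $\dist{x_k}{\mathcal{C}} \not\to 0$ then a subsequence bounded away from $\mathcal{C}$ would still have a sub-subsequence converging into $\mathcal{C}$, a contradiction), and part~(ii) follows by a worst-case selection: if $\deviation{\mathcal{C}_k^{\varepsilon_k}}{\mathcal{C}} \not\to 0$, I would pick $x_k \in \mathcal{C}_k^{\varepsilon_k}$ with $\dist{x_k}{\mathcal{C}} \geq \delta$ along some index subsequence, note $(x_k) \subset \mathscr{C}$ is bounded, and invoke the principle to reach a contradiction.

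To establish the principle, I would first use reflexivity of $X$ to extract a weakly convergent subsequence $x_{k_j} \wto \bar x$. By definition of $\mathcal{C}_k^{\varepsilon_k}$ there are $y_{k_j} \in \overline{\mathbb{B}}_{X}(x_{k_j};\varepsilon_{k_j}) \cap \dom{\psi}$, elements $v_{k_j} \in M_{k_j}(y_{k_j})$, and subgradients $\eta_{k_j} \in \partial \psi(x_{k_j})$ with $\coperator v_{k_j} + \eta_{k_j} = 0$; since $\varepsilon_{k_j} \to 0$ we also have $y_{k_j} \wto \bar x$. \Cref{assumption:consistency:subdifferentialboundedness} (which is inherited along subsequences whose weak limit lies in $\dom{\psi}$) bounds $(v_{k_j})$ in $V$, and then the compactness of $\coperator$ yields, after passing to a further subsequence, a strong limit $\coperator v_{k_j} \to \zeta$ in $X^*$; consequently $\eta_{k_j} = -\coperator v_{k_j} \to -\zeta$ strongly.

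The decisive step is to upgrade $x_{k_j} \wto \bar x$ to strong convergence, which is what makes \Cref{assumption:consistency:generalizedderivativeconsistency} applicable. I would pass to the limit in the convex subgradient inequality $\psi(w) \geq \psi(x_{k_j}) + \langle \eta_{k_j}, w - x_{k_j}\rangle$. Exploiting that $\eta_{k_j} \to -\zeta$ strongly while $x_{k_j} - \bar x \wto 0$ weakly (so the critical duality pairing vanishes), maximal monotonicity / demiclosedness of $\partial \psi$ gives $-\zeta \in \partial \psi(\bar x)$, in particular $\bar x \in \dom{\psi}$; choosing $w = \bar x$ gives $\limsup_j \psi(x_{k_j}) \leq \psi(\bar x)$, and weak lower semicontinuity of $\psi$ gives the reverse inequality, so $\psi(x_{k_j}) \to \psi(\bar x)$. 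The Kadec property of $\psi$ then forces $x_{k_j} \to \bar x$ strongly, and hence $y_{k_j} \to \bar x$ strongly as well. The main obstacle is precisely this coupling: \Cref{assumption:consistency:subdifferentialboundedness} delivers boundedness only when the weak limit is already known to lie in $\dom{\psi}$, so boundedness, the membership $\bar x \in \dom{\psi}$, and the energy convergence $\psi(x_{k_j}) \to \psi(\bar x)$ must be extracted simultaneously from the subgradient inequality and the compactness of $\coperator$ before the Kadec property can be invoked.

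It remains to place $\zeta$ in Clarke's subdifferential at $\bar x$. Using $\coperator v_{k_j} \in \partial_C f_{k_j}(y_{k_j})$ and the characterization of Clarke's subdifferential, for every $h \in X$ we have $\langle \coperator v_{k_j}, h \rangle \leq f_{k_j}^\circ(y_{k_j}; h)$; passing to the limit with $\coperator v_{k_j} \to \zeta$ strongly and $y_{k_j} \to \bar x$ strongly, \Cref{assumption:consistency:generalizedderivativeconsistency} yields $\langle \zeta, h\rangle \leq f^\circ(\bar x; h)$ for all $h$, whence $\zeta \in \partial_C f(\bar x)$. Combining $\zeta \in \partial_C f(\bar x)$ with $-\zeta \in \partial \psi(\bar x)$ gives $0 \in \partial_C f(\bar x) + \partial \psi(\bar x)$, that is, $\bar x \in \mathcal{C}$, completing the principle and thereby both parts of the theorem.
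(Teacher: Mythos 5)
Your route is essentially the paper's, repackaged: the paper's proof of part (i) is exactly your ``compactness principle'' (weak limit extraction, the variational inequality $\dualp[X]{g_k}{x-x_k}+\psi(x)\geq\psi(x_k)$, boundedness of $(v_k)$ via \Cref{assumption:consistency:subdifferentialboundedness}, compactness of $\coperator$ to get $g_k\to\bar g$ strongly, limit passage to get $\psi(x_k)\to\psi(\bar x)$, the Kadec property to upgrade to strong convergence, and \Cref{assumption:consistency:generalizedderivativeconsistency} with the support-function characterization of $\partial_C f$ to conclude $\bar g\in\partial_C f(\bar x)$), and the paper derives (ii) from (i) by a near-supremum selection where you argue by contradiction --- an immaterial difference.

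There is, however, one genuine gap, and you point at it yourself without closing it: the membership $\bar x \in \dom{\psi}$. \Cref{assumption:consistency:subdifferentialboundedness} yields boundedness of $(v_{k_j})$ only when the weak limit is already known to lie in $\dom{\psi}$; in your argument, $\bar x \in \dom{\psi}$ is obtained a posteriori from demiclosedness of $\partial\psi$, which requires $\eta_{k_j}=-\coperator v_{k_j}\to-\zeta$ strongly, which requires precisely the boundedness you are trying to justify. Declaring that boundedness, membership, and energy convergence ``must be extracted simultaneously'' names the difficulty but is not an argument: if $\bar x\notin\dom{\psi}$ cannot be excluded beforehand, the assumption gives no bound, no $\zeta$ exists, and the subgradient inequality cannot be passed to the limit, so the circle never closes. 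The paper breaks the circle at the very start: $\dom{\psi}$ is convex and weakly sequentially closed (\Cref{assumption:consistency:kadec} together with \cite[Thm.\ 2.23 (ii)]{Bonnans2013}), so $(x_{k_j})\subset\dom{\psi}$ and $x_{k_j}\wto\bar x$ already give $\bar x\in\dom{\psi}$ before any boundedness is invoked; demiclosedness of $\partial\psi$ is then superfluous, since the limit variational inequality directly says $-\bar g\in\partial\psi(\bar x)$. With that repair your argument goes through. A second, smaller omission: \Cref{assumption:consistency:generalizedderivativeconsistency}, like \Cref{assumption:consistency:subdifferentialboundedness}, is stated for full sequences, so applying it along the subsequence $(y_{k_j})$ requires padding to a full sequence converging to $\bar x$ --- this is exactly the role of the paper's auxiliary sequence $\widetilde{x}_k$ (equal to $y_k$ on the subsequence and to $\bar x$ off it); you flag this ``inheritance'' issue for the boundedness assumption but silently skip it where it is actually needed a second time.
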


\begin{proof}
	\Cref{assumption:consistency:lipschitz} ensures
	that the Clarke subdifferentials of $f$
	and $f_k$ are well-defined \cite[Prop.\ 2.1.2]{Clarke1990}. 
	
	\textnormal{(i)} 
	Let $(\dist{x_k}{\mathcal{C}})_{K_0}$ be a subsequence of
	$(\dist{x_k}{\mathcal{C}})  \subset [0,\infty]$.
	Since $x_k \in \mathcal{\mathcal{C}}_k^{\varepsilon_k}$,
	there exists $y_k \in \dom{\psi}$
	with $\norm[X]{x_k-y_k} \leq \varepsilon_k$
	and $g_k \in  \partial_C f_k(y_k)$ with
	\begin{align}
		\label{eq:Jun262023630}
		\dualp[X]{g_k}{x-x_k} + \psi(x) \geq \psi(x_k)
		\quad \text{for all} \quad x \in X.
	\end{align}
	The boundedness of  $(x_k)$ implies that of 
	$(x_k)_{K_0}$. Combined with the fact that $X$ is reflexive
	(see \Cref{assumption:consistency:bspaces})
	and that $\dom{\psi}$ is weakly sequentially closed
	(see \Cref{assumption:consistency:kadec} and
	\cite[Thm.\ 2.23 (ii)]{Bonnans2013}), we find that
	$(x_k)_{K_0}$ has a subsequence 
	$(x_{k})_{K_1}$ with $x_{k} \wto \bar x \in \dom{\psi}$
	as $K_1 \ni k \to \infty$.
	Since $\norm[X]{x_k-y_k} \leq \varepsilon_k$
	and $\varepsilon_k \to 0$,
	we find that $y_{k} \wto \bar x$
	as $K_1 \ni k \to \infty$.
	Since $g_k \in \partial_C f_k(x_k)$, 
	\Cref{assumption:consistency:subdifferentialcompactness} 
	ensures $g_k = \coperator v_k$ for some $v_k \in M_k(y_k)$.
	\Cref{assumption:consistency:subdifferentialboundedness} implies that
	$(v_K)_{K_1}$ is bounded.
	Combined with the compactness of $\coperator$
	(see \Cref{assumption:consistency:subdifferentialcompactness}), 
	we obtain that $(g_k)_{K_1} \subset X^*$ has a further subsequence
	$(g_k)_{K_2}$ with $g_k \to \bar g$ as $K_2 \ni k \to \infty$
	\cite[Thm.\ 8.1-5]{Kreyszig1978}.
	Hence 
	$\dualp[X]{g_k}{x-x_k} \to \dualp[X]{\bar g}{x-\bar x}$
	as $K_2 \ni k \to \infty$
	\cite[Thm.\ 2.23 (iv)]{Bonnans2013}.
	\Cref{assumption:consistency:kadec} implies that $\psi$
	is weakly lower semicontinuous. Hence
	$\liminf_{K_2 \ni k \to \infty}\, \psi(x_k) \geq \psi(\bar x)$.
	Putting together the pieces and
	using \eqref{eq:Jun262023630}, we have
	\begin{align}
		\label{eq:varineqbarx}
		\dualp[X]{\bar g}{x-\bar x} + \psi(x) \geq \psi(\bar x)
		\quad \text{for all} \quad x \in X.
	\end{align}
	Using \eqref{eq:Jun262023630} once more, we obtain
	$\psi(\bar x) = \lim_{K_2 \ni k \to \infty}\dualp[X]{g_k}{\bar x-x_k} + \psi(\bar x) \geq 
	\limsup_{K_2 \ni k \to \infty}\psi(x_k)$. 
	Hence $\psi(x_k) \to \psi(\bar x)$ as $K_2 \ni k \to \infty$.
	Since $\psi$ is Kadec according to
	\Cref{assumption:consistency:kadec}, $x_k \to \bar x$ 
	as $K_2 \ni k \to \infty$.
	Since $\norm[X]{x_k-y_k} \leq \varepsilon_k$, 
	we also have $y_{k} \to \bar x$
	as $K_2 \ni k \to \infty$.
	
	We define $\widetilde{x}_k = y_k$ if $k \in K_2$
	and $\widetilde{x}_k = \bar x$ otherwise.
	We have $\widetilde{x}_k \to \bar x$ as $k \to \infty$.
	Fix $h \in X$.
	Since $g_k \in  \partial_C f_k(y_k)$, 
	$f_k^\circ(y_k; h) \geq \dualp[X]{g_k}{h}$
	\cite[Prop.\ 2.1.5]{Clarke1990}. 
	Combined 
	with \Cref{assumption:consistency:generalizedderivativeconsistency}, 
	we have
	\begin{align*}
		\dualp[X]{\bar g}{h} = \lim_{K_2 \ni k \to \infty}\, 
		\dualp[X]{g_k}{h}
		\leq 
		\limsup_{K_2 \ni k \to \infty}\,f_k^\circ(y_k; h)
		\leq 
		\limsup_{k \to \infty}\,f_k^\circ(\widetilde{x}_k; h) 
		\leq f^\circ(\bar x; h).
	\end{align*}
	Hence $\bar g \in \partial_C f(\bar x)$
	\cite[Prop.\ 2.1.5]{Clarke1990}.
	Now \eqref{eq:varineqbarx} ensures
	$0 \in \partial_C f(\bar x) + \partial \psi(\bar x)$,
	that is, $\bar x \in \mathcal{C}$. Hence
	$\dist{x_k}{\mathcal{C}} \leq \norm[X]{x_k-\bar x} \to 0$
	as $K_2 \ni k \to 0$,
	and $(\dist{x_k}{\mathcal{C}})$ is bounded.
	Our derivations also show that each subsequence
	of $(\dist{x_k}{\mathcal{C}})$
	has a further subsequence converging to zero.
	Hence  $\dist{x_k}{\mathcal{C}} \to 0$ as $k \to \infty$.
	
	\textnormal{(ii)}
	Since 
	$\mathcal{C}_k^{\varepsilon_k} \subset \mathscr{C}$, and
	$\mathcal{C}_k^{\varepsilon_k}$ and $\mathcal{C}$
	are nonempty, 
	and $\mathscr{C}$ is bounded, we have
	$\deviation{\mathcal{C}_k^{\varepsilon_k}}{\mathcal{C}}
	\leq \deviation{\mathscr{C}}{\mathcal{C}} < \infty$
	for all sufficiently large $k \in \mathbb{N}$. 
	Hence for all sufficiently large $k \in \mathbb{N}$, 
	there exists
	$x_k \in \mathcal{C}_k^{\varepsilon_k}$
	with 
	$\deviation{\mathcal{C}_k^{\varepsilon_k}}{\mathcal{C}}
	\leq \dist{x_k}{\mathcal{C}} + 1/k$.
	Using part~\textnormal{(i)}, we
	have $\dist{x_k}{\mathcal{C}} \to 0$ as $k \to \infty$.
	Hence 	
	$\deviation{\mathcal{C}_k^{\varepsilon_k}}{\mathcal{C}} \to 0$
	as $k \to \infty$.
\end{proof}

\section{Empirical approximations via Monte Carlo sampling}
\label{sec:mcsaa}
Let $\Xi$ be a complete, separable metric space,
and let $(\Xi, \mathcal{A}, \mathbb{P})$ be a complete
probability space. 
Let $\xi^1$, $\xi^2, \ldots$ 
be independent identically distributed
$\Xi$-valued random elements defined on a complete
probability space $(\Omega, \mathcal{F}, P)$ such that each $\xi^i$ has
distribution $\mathbb{P}$.

We consider the risk-neutral optimization problem
\begin{align}
	\label{eq:probmo}
	\min_{x \in \dom{\psi}}\,  \int_{\Xi}\, \pobj_{\xi}(x)\, \du \mathbb{P}(\xi)
	+ \psi(x)
\end{align}
and its SAA problem 
\begin{align}
	\label{eq:probaamo}
	\min_{x \in \dom{\psi}}\,  \frac{1}{N} \sum_{i=1}^N \pobj_{\xi^i}(x)
	+ \psi(x).
\end{align}

We study the almost sure convergence of SAA C-stationary points of
\eqref{eq:probaamo} to the C-stationary point's set of \eqref{eq:probmo}.
We analyze the consistency under conditions related to those in 
\Cref{assumption:consistency}.

\begin{assumption}
	\label{assumption:mcsampling}
	\begin{enumthm}[leftmargin=*,nosep]
		\item 
		\label{assumption:mcsampling:spaces}
		The spaces $X$ and $V$ are  separable Banach spaces, 
		$X$ is reflexive, and $X_0 \subset X$ is nonempty and open.
		\item 
		\label{assumption:mcsampling:psi}
		The function $\psi \colon X \to (-\infty, \infty]$ is proper, 
		convex, and lower semicontinuous. Moreover, $\psi$ 
		has the Kadec property  and $\dom{\psi} \subset X_0$.

		\item 
		\label{assumption:mcsampling:integrable}
		For each $\bar x \in \dom{\psi}$, 
		there exists an open neighborhood $\mathcal{V}_{\bar x} \subset X_0$
		of $\bar x$ and a $\mathbb{P}$-integrable 
		random variable $L_{\bar x} \colon \Xi \to [0,\infty)$
		such that for each $\xi \in \Xi$, $\pobj_{\xi}\colon X_0 \to \real$ 
		is Lipschitz continuous on
		$\mathcal{V}_{\bar x}$ with Lipschitz constant 
		$L_{\bar x}(\xi)$. 
		
		\item 
		\label{assumption:mcsampling:regular}
		For each $\xi \in \Xi$, 
		$\pobj_{\xi} \colon X_0 \to \real$ is  Clarke regular. 
		
		\item 
		\label{assumption:mcsampling:mi}
		For each $x \in X_0$, 
		$\Xi \ni \xi \mapsto \pobj_{\xi}(x)\in \real$ is measurable
		and $\mathbb{P}$-integrable.
		
		\item 
		\label{assumption:mcsampling:subdifferentialidentify}
		
		The operator $\coperator \in \spL{V}{X^*}$ is compact, 
		$M_\xi \colon \dom{\psi}  \rightrightarrows V$ is a 
		set-valued map with nonempty
		images for each $\xi \in \Xi$, and 
		$\partial_C \pobj_{\xi}(x)= \coperator M_\xi(x)$
		for each $(x,\xi) \in \dom{\psi} \times \Xi$.
		\item 
		\label{assumption:mcsampling:subdifferentialbounded}
		There exists  $\Omega_0  \subset \Omega$
		with $\Omega_0 \in \mathcal{F}$ and $P(\Omega_0) = 1$
		such that  for each $\bar x \in \dom{\psi}$, 
		each sequence $(x_N) \subset \dom{\psi}$
		with $x_N \wto \bar x$, 
		and each sequence $(v_N) \subset V$,
		we have
		$
		\limsup_{N \to \infty} \, \norm[V]{v_N}
		< \infty
		$
		whenever $\omega \in \Omega_0$ and
		$v_N \in (1/N)\sum_{i=1}^N
		M_{\xi^i(\omega)}(x_N)$
		for each $N \in \mathbb{N}$.
	\end{enumthm}
\end{assumption}

\Cref{assumption:mcsampling:spaces,assumption:mcsampling:psi}
correspond to 
\Cref{assumption:consistency:bspaces,%
	assumption:consistency:kadec}, but we  impose in 
addition separability of $X$ and $V$.
\Cref{assumption:mcsampling:spaces,%
	assumption:mcsampling:regular,assumption:mcsampling:integrable}
allow us to use Clarke's calculus for generalized gradients of integral
functions  \cite[sect.\ 2.7]{Clarke1990}.
To apply the epigraphical law of large numbers derived in 
\cite{Artstein1995}, we use
\Cref{assumption:mcsampling:regular,assumption:mcsampling:integrable}.
\Cref{assumption:mcsampling:subdifferentialidentify,%
	assumption:mcsampling:subdifferentialbounded} generalize
technical conditions used in \cite{Milz2022}.

\begin{lemma}
	\label{lem:mcsampling:subdifferentialbounded}
	Let $\zeta \colon \Xi \to [0,\infty)$ be measurable
	and $\mathbb{P}$-integrable, 
	and let \Cref{assumption:mcsampling:spaces,%
		assumption:mcsampling:subdifferentialidentify,%
		assumption:mcsampling:psi} hold.
	If  $\norm[V]{m_\xi(x)} \leq \zeta(\xi)$
	for all $m_\xi(x)\in M_\xi(x)$, each $x \in \dom{\psi}$, 
	and every $\xi \in \Xi$, then
	\Cref{assumption:mcsampling:subdifferentialbounded} 
	holds true.
\end{lemma}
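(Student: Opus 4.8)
The plan is to reduce \Cref{assumption:mcsampling:subdifferentialbounded} to the classical strong law of large numbers applied to the $\mathbb{P}$-integrable dominating function $\zeta$. The crucial structural observation is that the hypothesized bound $\norm[V]{m_\xi(x)} \leq \zeta(\xi)$ is \emph{independent of} $x \in \dom{\psi}$. Consequently a single probability-one event---the event on which the empirical averages of $\zeta$ converge---will serve uniformly for every base point $\bar x$, every weakly convergent sequence $(x_N)$, and every measurable selection $(v_N)$ occurring in the assumption.

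First I would define $\Omega_0$ to be the set of $\omega \in \Omega$ for which $(1/N)\sum_{i=1}^N \zeta(\xi^i(\omega))$ converges to $\int_\Xi \zeta(\xi)\,\du\mathbb{P}(\xi)$. Since the $\xi^i$ are independent and identically distributed with law $\mathbb{P}$ and $\zeta$ is measurable and $\mathbb{P}$-integrable, the real random variables $\zeta \circ \xi^i$ are i.i.d.\ and integrable with common mean $\int_\Xi \zeta(\xi)\,\du\mathbb{P}(\xi) < \infty$; Kolmogorov's strong law of large numbers then furnishes $\Omega_0 \in \mathcal{F}$ with $P(\Omega_0) = 1$. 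I would emphasize that this $\Omega_0$ depends only on $\zeta$ and not on $\bar x$, $(x_N)$, or $(v_N)$.

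Next, fixing $\omega \in \Omega_0$, a point $\bar x \in \dom{\psi}$, a sequence $(x_N) \subset \dom{\psi}$ with $x_N \wto \bar x$, and a selection $(v_N)$ with $v_N \in (1/N)\sum_{i=1}^N M_{\xi^i(\omega)}(x_N)$, I would unwind the scaled Minkowski sum: there exist $m_N^i \in M_{\xi^i(\omega)}(x_N)$ with $v_N = (1/N)\sum_{i=1}^N m_N^i$. The triangle inequality combined with the dominating bound then yields
\begin{align*}
	\norm[V]{v_N} \leq \frac{1}{N}\sum_{i=1}^N \norm[V]{m_N^i}
	\leq \frac{1}{N}\sum_{i=1}^N \zeta(\xi^i(\omega)).
\end{align*}
Taking the limit superior and invoking the convergence guaranteed on $\Omega_0$ gives $\limsup_{N \to \infty} \norm[V]{v_N} \leq \int_\Xi \zeta(\xi)\,\du\mathbb{P}(\xi) < \infty$, which is precisely the conclusion of \Cref{assumption:mcsampling:subdifferentialbounded}.

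I do not anticipate a substantive obstacle. The only point demanding care is the uniformity of $\Omega_0$, and this is immediate because $\zeta$ is a single integrable function independent of the evaluation point $x$; the weak convergence $x_N \wto \bar x$ plays no role in the estimate and is merely carried along from the assumption. Measurability of each $\zeta \circ \xi^i$ follows from measurability of $\zeta$ and of the random elements $\xi^i$, so the strong law applies without further conditions.
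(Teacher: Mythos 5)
Your proposal is correct and follows essentially the same argument as the paper: the strong law of large numbers applied to $\zeta\circ\xi^i$ yields a single full-measure event $\Omega_0$ independent of the point and sequences, and the Minkowski-sum decomposition with the triangle inequality gives the uniform bound $\norm[V]{v_N} \leq (1/N)\sum_{i=1}^N \zeta(\xi^i(\omega))$. The only cosmetic difference is that you record the explicit limit $\int_\Xi \zeta(\xi)\,\du\mathbb{P}(\xi)$ as the value of the limit superior bound, whereas the paper only concludes finiteness.
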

\begin{proof}
	Since $\zeta(\xi^1), \zeta(\xi^2), \ldots$ are independent,
	the law of large numbers ensures that
	$(1/N) \sum_{i=1}^N \zeta(\xi^i)$
	converges almost surely to 
	$\int_{\Xi} \zeta(\xi) \, \du \mathbb{P}(\xi)$
	as $N \to \infty$. Hence there exists
	a set $\Omega_0  \subset \Omega$
	with $\Omega_0 \in \mathcal{F}$, $P(\Omega_0) = 1$,
	and for each $\omega \in \Omega_0$, 
	we have $(1/N) \sum_{i=1}^N \zeta(\xi^i(\omega))
	\to \int_{\Xi} \zeta(\xi) \, \du \mathbb{P}(\xi)$
	as $N \to \infty$.
	
	Fix $\omega \in \Omega_0$.
	Let  $\bar x \in \dom{\psi}$, 
	let  $(x_N) \subset \dom{\psi}$
	fulfill $x _N \wto \bar x$, 
	and let $(v_N)$ satisfy
	$v_N \in (1/N)\sum_{i=1}^N
	M_{\xi^i(\omega)}(x_N)$
	for all $N \in \mathbb{N}$. 
	The latter ensures the
	existence of $v_N^i \in M_{\xi^i(\omega)}(x_N)$
	with $v_N = (1/N)\sum_{i=1}^N v_N^i$. 
	Now the lemma's hypotheses ensure 
	$\norm[V]{v_N^i} \leq \zeta(\xi^i)$. Hence
	$
	\norm[V]{v_N}
	\leq (1/N) \sum_{i=1}^N \norm[V]{v_N^i}
	\leq (1/N)\sum_{i=1}^N \zeta(\xi^i)
	$.
	We obtain $
	\limsup_{N \to \infty} \, \norm[V]{v_N}
	< \infty
	$.
\end{proof}

We impose a technical measurability condition on 
Clarke's generalized directional derivative of
$\pobj_{\xi}$  in \Cref{thm:consistencymc}. 
The measurability of Clarke's subdifferential 
as a function of the decision variables 
and parameters has been analyzed, for example, in
\cite[Lem.\ 4]{Norkin1986} and \cite[Prop.\ 2.1]{Xu2009}.

Let  $\mathcal{C}$ be  the set of all C-stationary points
of \eqref{eq:probmo}, that is, the set of all 
$x \in \dom{\psi}$ with 
$0 \in \partial_C \big[\int_{\Xi}\, \pobj_{\xi}(x)\, \du \mathbb{P}(\xi)\big] + \partial \psi(x)$.
Let $\varepsilon \geq 0$ and for each $\omega \in \Omega$, let 
$\hat{\mathcal{C}}_N^{\varepsilon}(\omega)$
be  the set of all points $x \in \dom{\psi}$ with 
$$0 \in  \bigcup_{ v \in \overline{\mathbb{B}}_{X}(x;\varepsilon) \cap\dom{\psi}}
\partial_C  \Big[\frac{1}{N} \sum_{i=1}^N \pobj_{\xi^i(\omega)}(v)\Big] 
+ \partial \psi(x);$$
cf.\ \cite[eqns.\ (3.2) and (4.2)]{Shapiro2007}.
Each element in $\hat{\mathcal{C}}_N^{\varepsilon}(\omega)$
is referred to as an approximate C-stationary point of 
\eqref{eq:probaamo}. 

\begin{theorem}
	\label{thm:consistencymc}
	Let \Cref{assumption:mcsampling} hold true
	and  let $(\varepsilon_N) \subset [0,\infty)$ 
	be a sequence with $\varepsilon_N \to 0$
	as $N \to \infty$. Suppose that 
	$\dom{\psi} \times X \times \Xi \ni 
	(x,h,\xi) \mapsto \pobj_{\xi}^\circ(x;h)$ 
	is measurable, and let $\mathcal{C}$
	and $\hat{\mathcal{C}}_N^{\varepsilon_N}(\omega)$ 
	be nonempty for each $N \in \mathbb{N}$ and $\omega \in \Omega$. 
	\begin{enumerate}
		\item[\textnormal{(i)}] If 
		$\Omega_1 \subset \Omega$
		with $\Omega_1 \in \mathcal{F}$ and $P(\Omega_1) =1$ and
		for each $\omega \in \Omega_1$,
		$(x_N(\omega))$ is a bounded sequence with
		$x_N(\omega) \in \hat{\mathcal{C}}_N^{\varepsilon_N}(\omega)$  
		for all sufficiently large $N \in \mathbb{N}$,  then 
		with probability one (\wpone), 
		$\dist{x_N}{\mathcal{C}} \to 0$ as $N \to \infty$.
		\item[\textnormal{(ii)}]  If there exists a bounded
		set $\mathscr{C} \subset X$
		such that \wpone, $\hat{\mathcal{C}}_N^{\varepsilon_N}\subset \mathscr{C}$
		for all sufficiently large $N\in \mathbb{N}$, then \wpone, 
		$\deviation{\hat{\mathcal{C}}_N^{\varepsilon_N}}{\mathcal{C}} \to 0$
		as $N \to \infty$.
	\end{enumerate}
\end{theorem}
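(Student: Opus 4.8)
The strategy is to apply \Cref{thm:genericconsistency} along almost every sample path. For a fixed $\omega$ I would set $f(x) = \int_\Xi \pobj_\xi(x)\,\du\mathbb{P}(\xi)$, $f_N(x) = (1/N)\sum_{i=1}^N \pobj_{\xi^i(\omega)}(x)$, and $M_N(x) = (1/N)\sum_{i=1}^N M_{\xi^i(\omega)}(x)$, so that \eqref{eq:probmo} and \eqref{eq:probaamo} become instances of \eqref{eq:probf} and \eqref{eq:probfk} and the sets $\mathcal{C}$, $\hat{\mathcal{C}}_N^{\varepsilon_N}(\omega)$ coincide with the sets of \Cref{thm:genericconsistency}. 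The entire task then reduces to exhibiting a probability-one event on which \Cref{assumption:consistency} holds for these data; parts (i) and (ii) follow at once from the corresponding parts of \Cref{thm:genericconsistency} applied pathwise, after intersecting with the probability-one sets furnished by the hypotheses.

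Most of \Cref{assumption:consistency} transfers directly. \Cref{assumption:consistency:bspaces,assumption:consistency:kadec} are immediate from \Cref{assumption:mcsampling:spaces,assumption:mcsampling:psi}. Local Lipschitz continuity of $f_N$ (part of \Cref{assumption:consistency:lipschitz}) holds because a finite average of locally Lipschitz functions is locally Lipschitz, while that of $f$, together with the representation $f^\circ(\bar x;h) = \int_\Xi \pobj_\xi^\circ(\bar x;h)\,\du\mathbb{P}(\xi)$, follows from \Cref{assumption:mcsampling:integrable,assumption:mcsampling:regular} via Clarke's calculus for integral functionals \cite[sect.\ 2.7]{Clarke1990}; the measurability hypothesis on $(x,h,\xi)\mapsto\pobj_\xi^\circ(x;h)$ ensures this integral is well-defined. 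For \Cref{assumption:consistency:subdifferentialcompactness} I would invoke Clarke regularity (\Cref{assumption:mcsampling:regular}), under which the sum rule for the Clarke subdifferential holds with equality, so that, using \Cref{assumption:mcsampling:subdifferentialidentify} and the linearity of $\coperator$, $\partial_C f_N(x) = (1/N)\sum_{i=1}^N \partial_C \pobj_{\xi^i(\omega)}(x) = \coperator\big[(1/N)\sum_{i=1}^N M_{\xi^i(\omega)}(x)\big] = \coperator M_N(x)$. Finally, \Cref{assumption:consistency:subdifferentialboundedness} is exactly \Cref{assumption:mcsampling:subdifferentialbounded} for this $M_N$ and for $\omega\in\Omega_0$.

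The crux is \Cref{assumption:consistency:generalizedderivativeconsistency}: almost surely, for each $h$ and each $x_N\to\bar x$ in $\dom{\psi}$, $\limsup_{N} f_N^\circ(x_N;h) \le f^\circ(\bar x;h)$. Clarke regularity gives $f_N^\circ(x_N;h) = (1/N)\sum_{i=1}^N \pobj_{\xi^i(\omega)}^\circ(x_N;h)$, so it suffices to control the sample average of the random functions $x\mapsto\pobj_\xi^\circ(x;h)$. These are upper semicontinuous in $x$, and by \Cref{assumption:mcsampling:integrable} obey $|\pobj_\xi^\circ(x;h)|\le L_{\bar x}(\xi)\norm[X]{h}$ near $\bar x$; hence $x\mapsto -\pobj_\xi^\circ(x;h)$ is a lower semicontinuous normal integrand (the measurability hypothesis supplies joint measurability) with an integrable local bound. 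I would apply the epigraphical law of large numbers of \cite{Artstein1995} to these functions. Its epi-lower inequality states that for every $x_N\to\bar x$, $\liminf_N (1/N)\sum_{i=1}^N[-\pobj_{\xi^i(\omega)}^\circ(x_N;h)] \ge -\int_\Xi\pobj_\xi^\circ(\bar x;h)\,\du\mathbb{P}(\xi)$, which rearranges to exactly the desired $\limsup$ bound since the integral equals $f^\circ(\bar x;h)$.

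The delicate point is that the exceptional null set produced by the epi-LLN may depend on $h$, whereas \Cref{assumption:consistency:generalizedderivativeconsistency} must hold simultaneously for all $h\in X$ on one event. I would resolve this by running the epi-LLN only along a countable dense set of directions---available since $X$ is separable (\Cref{assumption:mcsampling:spaces})---and then extending to arbitrary $h$ using that $h\mapsto\pobj_\xi^\circ(x;h)$ is Lipschitz with constant $L_{\bar x}(\xi)$, a bound that passes to both $f_N^\circ(x_N;\cdot)$ and $f^\circ(\bar x;\cdot)$. Intersecting the resulting probability-one event with $\Omega_0$ and with the probability-one set from the hypothesis of part (i) (respectively part (ii)) yields the sample paths on which \Cref{assumption:consistency} is in force, and \Cref{thm:genericconsistency} then delivers both claims.
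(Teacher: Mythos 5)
Your overall route is the same as the paper's: fix $\omega$, set $\hat M_N(\cdot,\omega)=(1/N)\sum_{i=1}^N M_{\xi^i(\omega)}(\cdot)$, verify \Cref{assumption:consistency} pathwise (sum rule with equality under Clarke regularity for \Cref{assumption:consistency:subdifferentialcompactness}, \Cref{assumption:mcsampling:subdifferentialbounded} on $\Omega_0$ for \Cref{assumption:consistency:subdifferentialboundedness}, the Artstein--Wets epigraphical law of large numbers together with the identities $f^\circ(x;h)=\int_\Xi \pobj_\xi^\circ(x;h)\,\du\mathbb{P}(\xi)$ and $f_N^\circ(x;h)=(1/N)\sum_{i=1}^N\pobj_{\xi^i(\omega)}^\circ(x;h)$ for \Cref{assumption:consistency:generalizedderivativeconsistency}), and then invoke \Cref{thm:genericconsistency} on an intersection of finitely many probability-one events. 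Where you deviate is the treatment of the quantifier ``for all $h\in X$,'' and that is where your argument has a genuine gap.

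The paper applies the hypo-convergence law of large numbers to the \emph{bivariate} functions $\dom{\psi}\times X\ni(x,h)\mapsto \pobj_\xi^\circ(x;h)$; hypoconvergence in the joint variable gives, on a single event $\Omega_2$, the inequality $\limsup_N f_N^\circ(x_N;h)\le f^\circ(\bar x;h)$ simultaneously for all $\bar x$, all $(x_N)\subset\dom{\psi}$ with $x_N\to\bar x$, and all $h$ (take the constant sequence in the $h$-slot), so no extension argument is needed. You instead run the LLN per direction $h_j$ in a countable dense set and extend by density, but the extension as you state it does not go through: bounds $\limsup_N f_N^\circ(x_N;h_j)\le f^\circ(\bar x;h_j)$ on a dense set of directions transfer to an arbitrary $h$ only if the Lipschitz constants of $h\mapsto f_N^\circ(x_N;h)$ are bounded \emph{uniformly in $N$}. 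The bound you invoke is $(1/N)\sum_{i=1}^N L_{\bar x}(\xi^i(\omega))$, and making its $\limsup$ finite requires a strong law for $L_{\bar x}(\xi^i)$ whose exceptional null set depends on $\bar x$; since $\bar x$ ranges over the uncountable set $\dom{\psi}$, this reintroduces exactly the uncountable-intersection problem you set out to avoid for $h$. The gap is repairable in at least three ways, but one of them must be carried out: (a) apply the LLN jointly in $(x,h)$, as the paper does; (b) note that on $\Omega_0$, \Cref{assumption:mcsampling:subdifferentialbounded}, the nonemptiness of the images of $M_\xi$, and the identity $\partial_C f_N(x_N)=\coperator\hat M_N(x_N,\omega)$ force $\limsup_N\,\sup\{\norm[X^*]{g}\,:\,g\in\partial_C f_N(x_N)\}<\infty$ along any $(x_N)\subset\dom{\psi}$ with $x_N\wto\bar x$, and since $f_N^\circ(x_N;\cdot)$ is the support function of $\partial_C f_N(x_N)$ this is precisely the required equi-Lipschitz bound; or (c) use separability of $X$ (hence the Lindel\"of property) to extract a countable subcover $\{\mathcal{V}_{\bar x_j}\}$ of $\dom{\psi}$ from \Cref{assumption:mcsampling:integrable} and run the strong law only for the countably many $L_{\bar x_j}$.
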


\begin{proof}
	We establish the assertions via applications
	of \Cref{thm:genericconsistency}. 
	
	First, we observe that for each $\omega \in \Omega_0$,
	\Cref{assumption:mcsampling:subdifferentialbounded} 
	implies 
	\Cref{assumption:consistency:subdifferentialboundedness}
	with $M_k(\cdot)$ replaced 
	by
	$\hat{M}_N(\cdot, \omega)= 
	(1/N)  \sum_{i=1}^N M_{\xi^i(\omega)}(\cdot)$.
	
	Second, 
	\Cref{assumption:mcsampling:regular,assumption:mcsampling:integrable,%
		assumption:mcsampling:subdifferentialidentify},
	and the sum rule
	\cite[Cor.\ 3 on p. 40]{Clarke1990} ensure
	for all $\omega \in \Omega$, $x \in X_0$, 
	and every $N \in \mathbb{N}$,
	\begin{align*}
		\partial_C  \Big[\frac{1}{N} \sum_{i=1}^N \pobj_{\xi^i(\omega)}(x)\Big] 
		= \frac{1}{N} \sum_{i=1}^N \partial_C  \pobj_{\xi^i(\omega)}(x)
		=  \frac{1}{N} \sum_{i=1}^N  \coperator M_{\xi^i(\omega)}(x)
		= \coperator \hat{M}_N(x, \omega).
	\end{align*}
	Hence for all $\omega \in \Omega$,
	\Cref{assumption:consistency:subdifferentialcompactness} holds true
	with $M_k(\cdot)$ replaced  by
	$\hat{M}_N(\cdot, \omega)$.
	
	Third,
	we show that for almost every $\omega \in \Omega$,
	\Cref{assumption:consistency:generalizedderivativeconsistency}
	is satisfied with $f(\cdot)$ replaced 
	by the expectation function
	$\int_{\Xi} \pobj_\xi(\cdot) \, \du \mathbb{P}(\xi)$
	and with $f_k(\cdot)$ replaced by the SAA objective function
	$(1/N) \sum_{i=1}^N \pobj_{\xi^i(\omega)}(\cdot)$.
	For each $\xi \in \Xi$, 
	$X_0 \times X \ni (x,h) \mapsto  \pobj_{\xi}^{\circ}(x; h)$ 
	is upper semicontinuous
	\cite[Prop.\ 2.1.1]{Clarke1990}. 
	Fix $\bar x \in \dom{\psi}$. 
	Let $\mathcal{V}_{\bar x}$ be an open neighborhood
	and let $L_{\bar x}$ be a Lipschitz constant given by
	\Cref{assumption:mcsampling:integrable}. 
	Using \cite[Prop.\ 2.1.1]{Clarke1990}, 
	we have
	$|\pobj_\xi^{\circ}(x; h)| \leq L_{\bar x}(\xi)$ 
	for each $(x, h, \xi) \in \mathcal{V}_{\bar x} \times X \times \Xi$.
	Moreover $(x,h,\xi) \mapsto \pobj_{\xi}^\circ(x;h)$ 
	is measurable by assumption.
	Combined
	with \cite[Thm.\ 2.3]{Artstein1995}, we find that \wpone, 
	$\dom{\psi} \times X \ni 
	(x,h) \mapsto (1/N)  \sum_{i=1}^N \pobj_{\xi^i}^{\circ}(x; h)$
	hypoconverges to 
	$(x,h) \mapsto \int_\Xi \pobj_\xi^{\circ}(x; h) \, \du  \mathbb{P}(\xi)$.\footnote{%
		We refer the reader to \cite[p.\ 3]{Artstein1995} for a definition
		of epiconvergence of a sequence of functions $(g_k)$ to some function
		$g$. A sequence of functions $(g_k)$ hypoconverges to $g$ if 
		(and only if)	$(-g_k)$ epiconverges to $-g$ 
		\cite[pp.\ 242--243]{Rockafellar2009}.} 
	Using \Cref{assumption:mcsampling:spaces}, 
	\cite[Cor.\ 3 on p.\ 40 and Thm.\ 2.7.2]{Clarke1990}  
	and Clarke regularity of $\pobj_{\xi}$
	for each $\xi \in \Xi$ (see \Cref{assumption:mcsampling:regular}), 
	we have the identities
	$\int_\Xi \pobj_\xi^{\circ}(x; h) \, \du  \mathbb{P}(\xi) =  
	(\int_{\Xi} \pobj_\xi(\cdot) \, \du \mathbb{P}(\xi))^{\circ}(x; h)
	$
	and $(1/N)  \sum_{i=1}^N \pobj_{\xi^i}^{\circ}(x; h)
	=
	\big(
	(1/N)  \sum_{i=1}^N \pobj_{\xi^i}(\cdot)
	\big )^{\circ}(x; h)
	$
	for every $(x,h) \in X_0 \times X$.
	Hence
	there exists $\Omega_2 \subset \Omega$
	with $\Omega_2 \in \mathcal{F}$ and $P(\Omega_2) = 1$
	such that for each $\omega \in \Omega_2$, 
	all $h \in X$ and each $(x_N)  \subset \dom{\psi}$
	with $x_N \to x$, we have
	$$\limsup_{N \to \infty}\, 
	\Big(\frac{1}{N}
	\sum_{i=1}^N \pobj_{\xi^i(\omega)}(\cdot)
	\Big )^{\circ}(x_N; h)
	\leq 
	\Big(\int_{\Xi} \pobj_\xi(\cdot) \, \du \mathbb{P}(\xi)\Big)^{\circ}(x; h).$$

	Now we establish parts (i) and (ii).
	
	\textnormal{(i)}
	By assumption and construction, we have
	$ \Omega_0 \cap \Omega_1 \cap \Omega_2 \in \mathcal{F}$.
	Moreover this event happens \wpone. 
	Combining our derivations, \Cref{thm:genericconsistency}
	ensures for each $ \omega \in \Omega_0 \cap \Omega_1 \cap \Omega_2$, 
	we have
	$\dist{x_N(\omega)}{\mathcal{C}} \to 0$ as $N \to \infty$.

	\textnormal{(ii)}
	Since \wpone, $\hat{\mathcal{C}}_N^{\varepsilon_N}\subset \mathscr{C}$
	for all sufficiently large $N\in \mathbb{N}$,
	there exists a set $\Omega_3 \in \mathcal{F}$ with
	$P(\Omega_3) = 1$ such that for each $\omega \in \Omega_3$,
	there exists $n(\omega) \in \mathbb{N}$
	with $\hat{\mathcal{C}}_N^{\varepsilon_N}(\omega)\subset \mathscr{C}$
	for all $N \geq n(\omega)$.
	In particular
	$\Omega_0 \cap \Omega_2 \cap \Omega_3 \in \mathcal{F}$
	and $P(\Omega_0 \cap \Omega_2 \cap \Omega_3 ) =1$.
	Now  \Cref{thm:genericconsistency} ensures
	for each $ \omega \in \Omega_0 \cap \Omega_2 \cap \Omega_3$, 
	$\deviation{\hat{\mathcal{C}}_N^{\varepsilon_N}(\omega)}{\mathcal{C}} \to 0$
	as $N \to \infty$.
\end{proof}

\section{Approximations via weakly converging probability measures}
\label{sec:wcsaa}
Motivated by recent contributions on epiconvergent discretizations of 
stochastic programs
\cite{Diem2022,Feinberg2022,Hess2019,Pennanen2005}, 
we establish consistency of approximations to 
the stochastic program \eqref{eq:probmo}
which result from approximating $\mathbb{P}$ by a sequence
of weakly convergent probability measures  $(\mathbb{P}_N)$.
Our main interest in considering weakly convergent
probability measures stems from those considered in
\cite[sect.\ 2]{Pennanen2005} and
\cite[sect.\ 3]{Koivu2005}. 
Throughout the section, 
we assume $\Xi$ be a complete, separable metric
space and $\mathbb{P}$ be a probability
measure on $\Xi$. 
We refer the reader to \cite[p.\ 65]{Kallenberg2002}
for the definition of weak convergence of probability measures. 

For a sequence $(\mathbb{P}_N)$ of probability measures
defined on $\Xi$, 
we approximate \eqref{eq:probmo} via the optimization problems
\begin{align}
	\min_{x \in \dom{\psi}}\,  \int_{\Xi}\, \pobj_{\xi}(x)\, 
	\du \mathbb{P}_N(\xi)
	+ \psi(x).
\end{align}

Our consistency analysis is built on the conditions in
\cref{assumption:wsampling} and those needed to apply
\cite[Cor.\ 3.4]{Langen1981}.

\begin{assumption}
	\label{assumption:wsampling}
	In addition to
	\Cref{assumption:mcsampling:spaces,assumption:mcsampling:psi,%
		assumption:mcsampling:subdifferentialidentify,assumption:mcsampling:mi}, 
	we consider the following conditions.
	\begin{enumthm}[leftmargin=*,nosep]
		\item 
		\label{assumption:wcsampling:weakconvergence}	
		For each $N \in \mathbb{N}$, 
		$\mathbb{P}_N$ is a probability measure on $\Xi$, 
		and 	
		$(\mathbb{P}_N)$ weakly converges
		to $\mathbb{P}$ as $N \to \infty$.
		\item 	
		\label{assumption:wcsampling:differentiable}
		For each $\xi \in \Xi$, 
		$\pobj_{\xi} \colon X_0 \to \real$ is  
		continuously
		differentiable.
		For every $x \in X_0$, 
		$\Xi \ni \xi \mapsto \pobj_{\xi}(x)\in \real$ 
		is $\mathbb{P}_N$-integrable.
		For all  $h \in X$,
		$X_0 \times \Xi \ni (x,\xi) \mapsto \dualp[X]{\Du\pobj_{\xi}(x)}{h} 
		\in \real$ 
		is upper semicontinuous.
		\item 	
		\label{assumption:wcsampling:integrable} 
		\Cref{assumption:mcsampling:integrable} holds true
		with  $L_{\bar x}$ being continuous,
		$\mathbb{P}_N$-integrable for each $N \in \mathbb{N}$, and satisfying
		$\limsup_{N \to \infty}\, 
		\int_{\Xi} L_{\bar x}(\xi) \, \du \mathbb{P}_N(\xi)
		\leq \int_{\Xi} L_{\bar x}(\xi) \, \du \mathbb{P}(\xi)
		$.
		\item For each $\bar x \in \dom{\psi}$, 
		each sequence $(x_N) \subset \dom{\psi}$
		with $x_N \wto \bar x$, 
		and each sequence $(v_N)$ with 
		$v_N \in \int_{\Xi} M_{\xi}(x_N) \, \du \mathbb{P}_N(\xi)$
		for all $N \in \mathbb{N}$, 
		$
		\limsup_{N \to \infty} \, \norm[V]{v_N}
		< \infty
		$.
	\end{enumthm}
\end{assumption}

While \Cref{assumption:wcsampling:differentiable} imposes 
continuity and differentiablity, these conditions
are satisfied for certain problem classes \cite{Milz2020a}.
However the continuity conditions may be relaxed in view of
\cite[Thms.\ 3.6 and 5.2]{Artstein1994}.
Under \Cref{assumption:wcsampling:differentiable}, 
the continuously 
differentiable function $\pobj_{\xi} \colon X_0 \to \real$ is Clarke regular
for each $\xi \in \Xi$ \cite[Prop.\ 2.3.6]{Clarke1990}
and $\partial_C \pobj_{\xi}(x)  = \{\, \Du \pobj_{\xi}(x) \, \}$
\cite[Cor.\ on p.\ 32 and Prop.\ 2.2.4]{Clarke1990}.
Using \Cref{assumption:wsampling} and \cite[Lem.\ C.3]{Geiersbach2020},
we find that
$X_0 \ni x \mapsto \int_{\Xi}\, \pobj_{\xi}(x)\, \du \mathbb{P}(\xi)$
and
$X_0 \ni x \mapsto \int_{\Xi}\, \pobj_{\xi}(x)\, \du \mathbb{P}_N(\xi)$
are \frechet\ differentiable
and we can interchange derivatives  and integrals.

Let  $\mathfrak{C}$ be  the set of all 
$x \in \dom{\psi}$ with 
$0 \in \Du \big[\int_{\Xi}\, \pobj_{\xi}(x)\, \du \mathbb{P}(\xi)\big]
+ \partial \psi(x)$.
Let $\varepsilon \geq 0$ and for each $\omega \in \Omega$, let 
$\mathfrak{C}_N^{\varepsilon}$
be  the set of all points $x \in \dom{\psi}$ with 
$0 \in  \bigcup_{ v \in \overline{\mathbb{B}}_{X}(x;\varepsilon) \cap\dom{\psi}}
\Du \big[\int_{\Xi}\, \pobj_{\xi}(v) \, \du \mathbb{P}_N(\xi)\big]
+ \partial \psi(x)$.

\begin{theorem}
	\label{thm:consistencyw}
	Let \Cref{assumption:wsampling} hold true
	and  let $(\varepsilon_N) \subset [0,\infty)$ 
	be a sequence with $\varepsilon_N \to 0$
	as $N \to \infty$. Let $\mathfrak{C}$
	and $\mathfrak{C}_N^{\varepsilon_N}$ 
	be nonempty for each $N \in \mathbb{N}$. 
	\begin{enumerate}
		\item[\textnormal{(i)} ] 	If $(x_N)$ is bounded  with
		$x_N \in \mathfrak{C}_N^{\varepsilon_N}$  
		for each $N \in \mathbb{N}$, then 
		$\dist{x_N}{\mathcal{C}} \to 0$ as $N \to \infty$.
		\item[\textnormal{(ii)}]If there exists a bounded
		set $\mathscr{C} \subset X$
		such that $\mathfrak{C}_N^{\varepsilon_N}\subset \mathscr{C}$
		for all sufficiently large $N\in \mathbb{N}$, then 
		$\deviation{\mathfrak{C}_N^{\varepsilon_N}}{\mathfrak{C}} \to 0$
		as $N \to \infty$.
	\end{enumerate}
\end{theorem}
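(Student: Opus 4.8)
The plan is to deduce both assertions from \Cref{thm:genericconsistency} applied with $f(\cdot) = \int_\Xi \pobj_\xi(\cdot) \, \du\mathbb{P}(\xi)$ and $f_N(\cdot) = \int_\Xi \pobj_\xi(\cdot) \, \du\mathbb{P}_N(\xi)$, mirroring the proof of \Cref{thm:consistencymc}. The first step is to verify the structural parts of \Cref{assumption:consistency}. \Cref{assumption:consistency:bspaces,assumption:consistency:kadec} are immediate from \Cref{assumption:mcsampling:spaces,assumption:mcsampling:psi}. Local Lipschitz continuity of $f$ and of each $f_N$ (\Cref{assumption:consistency:lipschitz}) follows from \Cref{assumption:wcsampling:integrable}, since the continuous, $\mathbb{P}_N$-integrable Lipschitz constants $L_{\bar x}$ dominate the difference quotients uniformly on $\mathcal{V}_{\bar x}$. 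For \Cref{assumption:consistency:subdifferentialcompactness} I would use that continuous differentiability of $\pobj_\xi$ gives $\partial_C \pobj_\xi(x) = \{\Du\pobj_\xi(x)\} = \coperator M_\xi(x)$, and combine this with the interchange of differentiation and integration recorded after \Cref{assumption:wsampling} to obtain $\partial_C f_N(x) = \{\Du f_N(x)\} = \coperator \int_\Xi M_\xi(x)\,\du\mathbb{P}_N(\xi)$; hence the choice $M_N(x) = \int_\Xi M_\xi(x)\,\du\mathbb{P}_N(\xi)$ satisfies the required identity with the same compact operator $\coperator$. \Cref{assumption:consistency:subdifferentialboundedness} is exactly the final condition in \Cref{assumption:wsampling}. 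As a byproduct, since $\partial_C f = \{\Du f\}$ under these hypotheses, the stationarity sets coincide, so $\mathcal{C} = \mathfrak{C}$ and the conclusions may be stated in terms of $\mathfrak{C}$.

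The crux of the argument is the generalized-derivative consistency \Cref{assumption:consistency:generalizedderivativeconsistency}, that is, $\limsup_{N\to\infty} f_N^\circ(x_N;h) \leq f^\circ(\bar x;h)$ whenever $x_N \to \bar x$. Because each $\pobj_\xi$ is continuously differentiable and hence Clarke regular, the Clarke directional derivative coincides with the \frechet\ derivative in the direction $h$, and using the interchange of differentiation and integration I would rewrite both sides as integrals of a continuous linear functional of the gradient:
\begin{align*}
	f_N^\circ(x_N;h) &= \int_\Xi \dualp[X]{\Du\pobj_\xi(x_N)}{h}\,\du\mathbb{P}_N(\xi), \\
	f^\circ(\bar x;h) &= \int_\Xi \dualp[X]{\Du\pobj_\xi(\bar x)}{h}\,\du\mathbb{P}(\xi).
\end{align*}
The desired $\limsup$-inequality then reduces to a Fatou-type statement for the weakly convergent sequence $(\mathbb{P}_N)$ in which both the integrand (through $x_N$) and the measure vary. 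The integrand $(x,\xi)\mapsto \dualp[X]{\Du\pobj_\xi(x)}{h}$ is upper semicontinuous by \Cref{assumption:wcsampling:differentiable} and is dominated by the continuous function $\norm[X]{h}\,L_{\bar x}(\xi)$, which satisfies the asymptotic domination $\limsup_{N\to\infty}\int_\Xi L_{\bar x}\,\du\mathbb{P}_N \leq \int_\Xi L_{\bar x}\,\du\mathbb{P}$ supplied by \Cref{assumption:wcsampling:integrable}. These are precisely the hypotheses of \cite[Cor.\ 3.4]{Langen1981}, whose application delivers the inequality.

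With all six conditions of \Cref{assumption:consistency} in force, I would invoke \Cref{thm:genericconsistency}(i) to obtain $\dist{x_N}{\mathfrak{C}}\to 0$ for any bounded sequence of approximate C-stationary points, proving (i), and \Cref{thm:genericconsistency}(ii) to obtain $\deviation{\mathfrak{C}_N^{\varepsilon_N}}{\mathfrak{C}}\to 0$ under the uniform boundedness hypothesis, proving (ii). I expect the main obstacle to be the careful matching of the upper-semicontinuity and integrability conditions of \Cref{assumption:wsampling} to the precise requirements of \cite[Cor.\ 3.4]{Langen1981}: in particular, confirming that the asymptotic domination of $L_{\bar x}$ under $(\mathbb{P}_N)$ supplies the uniform integrability needed to pass the $\limsup$ through the weakly convergent measures, rather than merely the pointwise convergence of integrals guaranteed by weak convergence alone.
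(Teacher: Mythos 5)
Your proposal is correct and follows essentially the same route as the paper: both verify \Cref{assumption:consistency:generalizedderivativeconsistency} by rewriting the generalized directional derivatives as integrals of $\dualp[X]{\Du\pobj_{\xi}(\cdot)}{h}$ (via Clarke regularity and the derivative--integral interchange from \cite[Lem.\ C.3]{Geiersbach2020}), bounding the integrand by $L_{\bar x}(\xi)\norm[X]{h}$, applying \cite[Cor.\ 3.4]{Langen1981} under the weak convergence, upper semicontinuity, and asymptotic-domination hypotheses of \Cref{assumption:wsampling}, and then invoking \Cref{thm:genericconsistency}. Your additional explicit verification of the remaining conditions of \Cref{assumption:consistency}, and your observation that $\partial_C f=\{\Du f\}$ identifies $\mathcal{C}$ with $\mathfrak{C}$, are details the paper leaves implicit but are consistent with its argument.
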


\begin{proof}
	Using \cite[Cor.\ 3.4]{Langen1981}, we show that 
	\Cref{assumption:consistency:generalizedderivativeconsistency} 
	holds true with
	$f(\cdot)$ 
	replaced by $\int_{\Xi}\, \pobj_{\xi}(\cdot)\, \du \mathbb{P}(\xi)$, 
	and $f_k(\cdot)$
	replaced by
	$\int_{\Xi}\, \pobj_{\xi}(\cdot)\, \du \mathbb{P}_N(\xi)$.
	Fix $h \in X$, 
	let $\bar x \in \dom{\psi}$ and let $(x_N) \subset \dom{\psi}$
	with $x_N \to \bar x$ be arbitrary.
	According to \Cref{assumption:wcsampling:integrable},
	there exists $\varepsilon  = \varepsilon(\bar x)> 0$
	such $\pobj_\xi$
	is Lipschitz continuous on $\mathbb{B}_X(\bar x, \varepsilon)$
	with a Lipschitz constant $L_{\bar x}(\xi)$ for each $\xi \in \Xi$, 
	$L_{\bar x}$ is continuous, $\mathbb{P}$- and
	$\mathbb{P}_N$-integrable, and 
	$\limsup_{N \to \infty}\, 
	\int_{\Xi} L_{\bar x}(\xi) \, \du \mathbb{P}_N(\xi)
	\leq \int_{\Xi} L_{\bar x}(\xi) \, \du \mathbb{P}(\xi)
	$.
	The mean value theorem yields
	$|\dualp[X]{\Du\pobj_{\xi}(x)}{h}| \leq L_{\bar x}(\xi) \norm[X]{h}$
	for all $x \in \mathbb{B}_X(\bar x, \varepsilon)$, $h \in X$, and
	$\xi \in \Xi$.
	Combined with \Cref{assumption:wcsampling:weakconvergence,%
		assumption:wcsampling:differentiable}, 
	\cite[Cor.\ 3.4]{Langen1981} 
	yields
	\begin{align*}
		\limsup_{N \to \infty}\, 
		\int_{\Xi} \dualp[X]{\Du\pobj_{\xi}(x_N)}{h}\, \du \mathbb{P}_N(\xi)
		\leq \int_{\Xi} \dualp[X]{\Du\pobj_{\xi}(\bar x)}{h}\, 
		\du \mathbb{P}(\xi).
	\end{align*}
	Using the chain rule \cite[Lem.\ C.3]{Geiersbach2020}, 
	we can interchange derivatives and integrals in the above
	equation. Hence
	\Cref{assumption:consistency:generalizedderivativeconsistency} 
	holds true.
	
	Both assertions are now implied by \Cref{thm:genericconsistency}.
\end{proof}

\section{Application to risk-averse semilinear PDE-constrained
	optimization}
\label{sect:avar}
We show that our main result, \Cref{thm:consistencymc}, applies
to risk-averse PDE-constrained optimization using the average
value-at-risk.
For $\beta \in (0,1)$, and a $\mathbb{P}$-integrable random
variable $Z \colon \Xi \to \real$, 
the average value-at-risk
$\AVaR{}$  evaluated at $Z$ is defined by
\begin{align}
	\label{eq:avar}
	\AVaR{Z} = \inf_{t \in \real}\,
	\big\{\,t + \tfrac{1}{1-\beta} \mathbb{E}_{\mathbb{P}}[\maxo{Z-t}]\,
	\big\},
\end{align}
where $\maxo{s} =  \max\{0,s\}$ for $s \in \real$
\cite[eq.\ (6.23)]{Shapiro2021}.
We consider the risk-averse semilinear PDE-constrained problem
\begin{align}
	\label{eq:ocpspde}
	\min_{u \in \adcsp}\,
	(1/2) \AVaR{\norm[L^2(\domain)]{\iota_0 S(u,\cdot)-y_d}^2}
	+ \wp(\norm[L^2(\domain)]{u}),
\end{align}
where $\wp \colon [0,\infty) \to [0,\infty)$, 
$y_d \in L^2(\domain)$,
$\adcsp \subset L^2(\domain)$,
and for each $(u,\xi) \in  L^2(\domain) \times \Xi$,
$y =S(u,\xi) \in H_0^1(\domain)$  solves
the semilinear PDE
\begin{align*}
	\inner[L^2(\domain)^d]{\rdc(\xi)\nabla y}{\nabla v}
	+\inner[L^2(\domain)]{y^3}{v} =
	\inner[L^2(\domain)]{u}{v}+
	\inner[L^2(\domain)]{\rrhs(\xi)}{v}
	\:\: \text{for all} \:\: v \in H_0^1(\domain).
\end{align*}

We impose mild conditions on the semilinear PDE, 
the feasible set $\adcsp$, and  $\wp$.

\begin{assumption}[{semilinear control problem}]
	\label{ass:slocp}
	~
	\begin{enumthm}[leftmargin=*,nosep]
		\item 
		\label{itm:slocp:domain}
		$\domain \subset \real^d$ is a bounded, convex,
		polygonal/polyhedral domain
		with $d \in  \{2,3\}$.
		\item 
		\label{itm:slocp:kappa}
		$\kappa :  \Xi \to L^\infty(\domain)$ is strongly measurable
		and there exists $0 < \kappa_{\min} \leq \kappa_{\max} <\infty$
		such that for all $\xi \in \Xi$,
		$\kappa_{\min} \leq \kappa(\xi)(x) \leq \kappa_{\max}$
		for a.e.\ $x \in \domain$.
		\item 
		\label{itm:slocp:rrhs}
		$\rrhs : \Xi \to L^2(\domain)$
		is measurable and there exists
		$b_{\max} \geq 0$
		with 
		$\norm[L^2(\domain)]{\rrhs(\xi)} \leq b_{\max}$
		for all $\xi \in \Xi$. 
		\item 
		$\adcsp \subset L^2(\domain)$ is closed, convex, nonempty, 
		and there exists $r_{\text{ad}} \in (0,\infty)$ such that
		$\norm[L^2(\domain)]{u} \leq r_{\text{ad}}$
		for all $u \in \adcsp$.
		\item $\wp \colon [0,\infty) \to [0,\infty)$
		is convex and strictly increasing. 
	\end{enumthm}
\end{assumption}

If not stated otherwise, let \Cref{ass:slocp} hold throughout the section. 
Let us define $G 	\colon  \real \times \real \to \real $ 
and $\widehat{J} \colon L^2(\domain) \times \Xi \to [0,\infty)$
by
\begin{align*}
	G(s,t) =
	t + \tfrac{1}{1-\beta}  
	\maxo{s-t}
	\quad \text{and} \quad 
	\widehat{J}(u,\xi) = (1/2) \norm[L^2(\domain)]{\iota_0 S(u,\xi)-y_d}^2.
\end{align*}
Moreover, we define $L^2(\domain)  \times \real
\times \Xi \ni (u,t,\xi) \mapsto
\pobj_\xi(u,t) \in [0,\infty)$ by
\begin{align*}
	\pobj_\xi(u,t)  = G(\widehat{J}(u,\xi),t).
\end{align*}
To study consistency of SAA C-stationary points, we reformulate \eqref{eq:ocpspde}
equivalently as
\begin{align}
	\label{eq:ocpspde'}
	\min_{(u,t) \in \adcsp \times \real}\,
	\int_{\Xi} G(\widehat{J}(u,\xi),t) \, \du \mathbb{P}(\xi)
	+ \wp(\norm[L^2(\domain)]{u}).
\end{align}
Let $\Xi$, let $\mathbb{P}$, 
and let $\xi^1$, $\xi^2, \ldots$  be as in \cref{sec:mcsaa}. 
The SAA problem of \eqref{eq:ocpspde'} is given by
\begin{align}
	\label{eq:saapde'}
	\min_{(u,t) \in \adcsp \times \real}\,
	\frac{1}{N} \sum_{i=1}^N G(\widehat{J}(u,\xi^i),t) 
	+ \wp(\norm[L^2(\domain)]{u}).
\end{align}

For the remainder of the section,
we verify \Cref{assumption:mcsampling}. 
For the spaces $L^2(\domain) \times \real$,
$V = H_0^1(\domain) \times \real$, and
the open, convex, and bounded set
$\csp_0 = \adcsp + \mathbb{B}_{L^2(\domain)}(0;1)$,
\Cref{assumption:mcsampling:spaces} 
holds true.
Let $\psi$ be  the sum of the indicator function of $\adcsp$
and the regularizer $\wp \circ \norm[L^2(\domain)]{\cdot}$.
Since $\wp$ is convex and strictly increasing,
and $L^2(\domain)$ is a Hilbert space, $\psi$ has the Kadec property;
see  \cite[Lem.\ 1]{Milz2022a} and \cite[Thm.\ 6.5]{Borwein1991}. Hence
\Cref{assumption:mcsampling:psi} holds true.

For each $(u,\xi) \in  L^2(\domain) \times \Xi$, 
the PDE solution 
$S(u,\xi)$ is well-defined  \cite[Thm.\ 26.A]{Zeidler1990}, and
we have for all $(u,\xi) \in L^2(\domain) \times \Xi$,
the stability estimate
\begin{align}
	\label{eq:ocpspde_stability}
	\norm[H_0^1(\domain)]{S(u,\xi)} \leq 
	(C_\domain /\kappa_{\min}) \norm[L^2(\domain)]{u}
	+ C_\domain (b_{\max}/\kappa_{\min}).
\end{align}
Defining
\begin{align*}
	\mathfrak{c}_S
	=  (C_\domain /\kappa_{\min}) (r_{\text{ad}}+1)
	+ C_\domain (b_{\max}/\kappa_{\min}),
\end{align*}
we obtain for all 
$(u,\xi) \in \csp_0\times \Xi$, the  stability estimate
\begin{align}
	\label{eq:ocpspde_integrable}
	\widehat{J}(u,\xi) \leq 
	\norm[L^2(\domain)]{\iota_0 S(u,\xi)}^2
	+
	\norm[L^2(\domain)]{y_d}^2
	\leq C_\domain^2\mathfrak{c}_S^2 + \norm[L^2(\domain)]{y_d}^2.
\end{align}

The adjoint approach  \cite[sect.\ 1.6.2]{Hinze2009} ensures
that  for each $\xi \in \Xi$, $\widehat{J}(\cdot,\xi)$ is continuously
differentiable and
\begin{align*}
	\nabla_u \widehat{J}(\cdot,\xi) =  -\iota_0 z(u,\xi)
	\quad \text{for all} \quad (u,\xi) \in L^2(\domain) \times \Xi,
\end{align*}
where for each $ (u,\xi) \in L^2(\domain) \times \Xi$, 
$z = z(u,\xi) \in H_0^1(\domain)$ solves  the adjoint equation
\begin{align*}
	\inner[L^2(\domain)^d]{\rdc(\xi)\nabla z}{\nabla v}
	+3\inner[L^2(\domain)]{S(u,\xi)^2z}{v} =
	-\inner[L^2(\domain)]{\iota_0 S(u,\xi)-y_d}{v}
\end{align*}
for all $v \in H_0^1(\domain)$. For all
$ (u,\xi) \in L^2(\domain) \times \Xi$, we have
\begin{align*}
	\norm[H_0^1(\domain)]{z(u,\xi)} \leq 
	(C_\domain/\kappa_{\min})\norm[L^2(\domain)]{\iota_0 S(u,\xi)}
	+  (C_\domain/\kappa_{\min}) \norm[L^2(\domain)]{y_d}.
\end{align*}
Combined with the stability estimate \eqref{eq:ocpspde_stability},
we have for each $(u,\xi) \in\csp_0\times \Xi$,
\begin{align}
	\label{eq:ocpspde_adstability}
	\norm[H_0^1(\domain)]{z(u,\xi)} \leq 
	(C_\domain^2/\kappa_{\min})\mathfrak{c}_S
	+  (C_\domain/\kappa_{\min}) \norm[L^2(\domain)]{y_d}.
\end{align}
Let $\mathfrak{c}_z$ be the right-hand side in \eqref{eq:ocpspde_adstability}.
Since $G$ is  Lipschitz continuous
and the bound
$\norm[L^2(\domain)]{\nabla_u\widehat{J}(\cdot,\xi)}
\leq C_\domain \mathfrak{c}_z$ is valid
for all $(u,\xi) \in\csp_0\times \Xi$,
\Cref{assumption:mcsampling:integrable} 
is satisfied.

Fix $\xi \in \Xi$. 
Since the mapping 
$(u,t) \mapsto (\widehat{J}(u,\xi),t) \in \real^2$ is continuously
differentiable and $g$ is finite-valued and convex, 
$\pobj_\xi$ is Clarke regular   on $L^2(\domain) \times \real$ 
and
\begin{align*}
	\partial_C \pobj_\xi(u,t)  =
	(\nabla_u \widehat{J}(u,\xi), -1)^*
	\partial G(\widehat{J}(u,\xi),t)
	=
	\begin{bmatrix}
		\frac{1}{1-\beta}\partial \maxo{\widehat{J}(u,\xi)-t} 
		\nabla_u \widehat{J}(u,\xi)
		\\
		1- \frac{1}{1-\beta}\partial \maxo{\widehat{J}(u,\xi)-t} 
	\end{bmatrix};
\end{align*}
see \cite[Prop.\ 2.3.6 and Thm.\ 2.3.10]{Clarke1990}. The Clarke
regularity ensures \Cref{assumption:mcsampling:regular}.
Using  \cite[Thm.\ 8.2.9]{Aubin2009}, we can show that
$S(u,\cdot)$ is  measurable for each $u \in L^2(\domain)$.
Combined with \eqref{eq:ocpspde_integrable}, 
we find that \Cref{assumption:mcsampling:mi} holds true.

We define 
$\coperator \colon H_0^1(\domain) \times \real \to L^2(\domain)\times \real$
and 
$M_\xi \colon L^2(\domain) \times \real \rightrightarrows
H_0^1(\domain) \times \real
$
by
\begin{align}
	\label{eq:sleq_BM}
	\coperator(v,s) = (-\iota_0 v, s)
	\quad \text{and} \quad 
	M_\xi(u,t)
	=
	\begin{bmatrix}
		\frac{1}{1-\beta}\partial \maxo{\widehat{J}(u,\xi)-t} 
		z(u,\xi)
		\\
		1- \frac{1}{1-\beta}\partial \maxo{\widehat{J}(u,\xi)-t} 
	\end{bmatrix}.
\end{align}
We obtain
\begin{align*}
	\partial_C \pobj_\xi(u,t) =
	BM_\xi(u,t)
	\quad \text{for all} \quad (u,t,\xi) \in L^2(\domain) \times \real
	\times \Xi.
\end{align*}
Combined with the fact that $\iota_0$ is linear and compact
\cite[Thm.\ 1.14]{Hinze2009}, 
we conclude that \Cref{assumption:mcsampling:subdifferentialidentify} 
is fulfilled.

Next we verify \Cref{assumption:mcsampling:subdifferentialbounded} 
using \Cref{lem:mcsampling:subdifferentialbounded}.
For each $s \in \real$, $\partial \maxo{s} \subset [0,1]$.
Fix $ (u,t,\xi) \in \adcsp \times \real \times \Xi$.
For all $m = (m_1,m_2)\in M_\xi(u,t)$, 
\eqref{eq:sleq_BM} yields
\begin{align*}
	\norm[H_0^1(\domain) \times \real]{m}
	= \norm[H_0^1(\domain) ]{m_1} +
	|m_2|
	\leq 	\tfrac{1}{1-\beta} \mathfrak{c}_z 
	+1 + \tfrac{1}{1-\beta}.
\end{align*}
Hence \Cref{lem:mcsampling:subdifferentialbounded} implies
\Cref{assumption:mcsampling:subdifferentialbounded}.

To summarize, we have shown that \Cref{assumption:mcsampling} 
holds true.
In order to demonstrate the convergence of 
SAA C-stationary points of \eqref{eq:saapde'} towards 
those of  \eqref{eq:ocpspde'} 
via \Cref{thm:consistencymc}, we need to show that the
SAA C-stationary points are contained in a bounded, deterministic set.
Let $(u_N, t_N) \in \adcsp \times \real$ be  a C-stationary point of
\eqref{eq:ocpspde'}. We have $u_N \in \adcsp$,
which is by assumption a bounded set.  Next we show that
\begin{align*}
	t_N \in [0, C_\domain^2\mathfrak{c}_S^2 + \norm[L^2(\domain)]{y_d}^2].
\end{align*}
Since $(u_N,t_N)$ is a C-stationary point  of
\eqref{eq:ocpspde'},  the second formula in \eqref{eq:sleq_BM} implies that
$1 \in \tfrac{1}{N(1-\beta)} \sum_{i=1}^N
\partial\maxo{\widehat{J}(u_N,\xi^i)-t_N}$. 
Combined with $\beta \in (0,1)$ and $\widehat{J} \geq 0$, 
we have $t_N \geq 0$. Let $\mcAVaR{}$ be the empirical estimate
of $\AVaR{}$ with $\mathbb{P}$ replaced by the empirical 
distribution of the sample $\xi^1, \ldots, \xi^N$ in \eqref{eq:avar}.
The minimization problem in \eqref{eq:avar} with 
$\mathbb{P}$ replaced by the empirical distribution is convex.
Hence $t_N$ solves \eqref{eq:avar} with 
$\mathbb{P}$ replaced by the empirical distribution.
As with $\AVaR{}$ \cite[p.\ 231]{Shapiro2021}, 
the empirical average value-at-risk $\mcAVaR{}$ is a coherent 
risk measure. 
Since $\maxo{s} \geq 0$ for all $s \in \real$, 
$\beta \in (0,1)$, 
and coherent risk measures are monotone,
translation equivariant, and positively homogeneous
\cite[Def.\ 6.4]{Shapiro2021},
the upper bound
on $\widehat{J}$  in \eqref{eq:ocpspde_integrable} yields
\begin{align*}
	t_N \leq \mcAVaR{\widehat{J}(u_N,\xi)} \leq 
	C_\domain^2\mathfrak{c}_S^2 + \norm[L^2(\domain)]{y_d}^2.
\end{align*}

\section{Application to risk-neutral bilinear PDE-constrained optimization}
\label{subsec:blocp}%
Motivated by the deterministic bilinear control problems 
studied in \cite{Casas2018a,Glowinski2022,Kahlbacher2012,Kroener2009},
we consider risk-neutral optimization problems
governed by a bilinear elliptic PDEs with random inputs:
\begin{align}
	\label{eq:blocp}
	\min_{u\in \adcsp}\, 
	\int_{\Xi} \pttobj(S(u,\xi)) \, \du \mathbb{P}(\xi)
	+ (\alpha/2)\norm[L^2(\domain)]{u}^2,
\end{align}
where $\alpha >0$,  and
for each $(u,\xi) \in \adcsp \times \Xi$, 
$y=S(u,\xi) \in H_0^1(\domain)$ is the solution to
\begin{align}
	\label{eq:bleq}
	\inner[L^2(\domain)^d]{\rdc(\xi)\nabla y}{\nabla v}
	+\inner[L^2(\domain)]{g(\xi) u y}{v} =
	\inner[L^2(\domain)]{\rrhs(\xi)}{v}
	\quad \text{for all} \quad v \in H_0^1(\domain).
\end{align}

The following assumptions ensure existence and  regularity
of the solutions to \eqref{eq:bleq},
and impose conditions on the function $\pttobj$ and the set $\adcsp$.
These conditions allow us to \Cref{assumption:mcsampling} 
and apply \Cref{thm:consistencymc}
to study the consistency of SAA C-stationary points corresponding to 
\eqref{eq:blocp}.  Conditions beyond those formulated in 
\Cref{ass:blocp} may be required to allow for applications of 
\Cref{thm:consistencyw}.

\begin{assumption}[{bilinear control problem}]
	\label{ass:blocp}
	~
	\begin{enumthm}[leftmargin=*,nosep]
		\item 
		\label{itm:blocp:domain}
		$\domain \subset \real^d$ is a bounded, convex,
		polygonal/polyhedral domain
		with $d \in  \{2,3\}$.
		\item 
		\label{itm:blocp:kappa}
		$\kappa :  \Xi \to C^1(\bar{\domain})$ is measurable
		and there exists $\kappa_{\min} > 0$
		such that $\kappa_{\min} \leq \kappa(\xi)(x)$
		for all $(\xi, x) \in \Xi \times \bar{\domain}$.
		Moreover
		$\mathbb{E}_{\mathbb{P}}[
		\norm[C^{1}(\bar{\domain})]{\kappa}^p]< \infty$
		for all $p \in [1,\infty)$.
		\item 
		\label{itm:blocp:rrhs}
		$\rrhs : \Xi \to L^2(\domain)$
		and $g : \Xi \to C^{0,1}(\bar{\domain})$
		are measurable and there exist
		$b_{\max}$, $g_{\max} > 0$
		with 
		$\norm[L^2(\domain)]{\rrhs(\xi)} \leq b_{\max}$
		and
		$\norm[C^{0,1}(\bar{\domain})]{g(\xi)} \leq g_{\max}$
		for all $\xi \in \Xi$.
		Moreover, $g(\xi)(x) \geq 0$ for all $(\xi,x) \in \Xi \times \bar{\domain}$.
		
		\item 
		\label{itm:blocp:pobj}
		$\pttobj : H_0^1(\domain) \to [0,\infty)$ is 
		convex, continuously 
		differentiable and its derivative is Lipschitz continuous with
		Lipschitz constant $\ell > 0$.
		For some nondecreasing function $\varrho : [0,\infty) \to [0,\infty)$, 
		it holds that
		\begin{align*}
			\pttobj(y) \leq \varrho(\norm[H_0^1(\domain)]{y})
			\quad \text{for all} \quad y \in H_0^1(\domain).
		\end{align*}
		\item 
		\label{itm:blocp:adcsp}
		$ \adcsp = \{\, u \in L^2(\domain) :\, 0 \leq u(x) \leq \ub(x)
		\; \text{a.e.} \; x \in \domain  \,\}
		$
		with $\ub \in L^\infty(\domain)$ and $\ub(x) \geq 0$
		a.e.\  $x \in \domain$.
	\end{enumthm}
\end{assumption}

If not stated otherwise, let \Cref{ass:blocp} hold throughout the section. 
We  introduce  notation used throughout the section.
Let $C_{H_0^1; L^4} \in (0,\infty)$ be the embedding constant of the
compact embedding $H_0^1(\domain) \embedding L^4(\domain)$
and let $C_{H^2; L^\infty}> 0$ be 
that of the continuous embedding
$H^2(\domain) \embedding L^\infty(\domain)$
\cite[Thm.\ 1.14]{Hinze2009}.
We define $C_\ub = \norm[L^\infty(\domain)]{\ub}$
and $|\domain| = \int_\domain 1 \, \du x$.

Let $\psi$ be  the sum of the indicator function of $\adcsp$
and  $(\alpha/2)\norm[L^2(\domain)]{u}^2$.
The function $\psi$ has the Kadec property \cite[Thm.\ 6.5]{Borwein1991},
and \Cref{assumption:mcsampling:psi} holds true.

Next we provide an example function satisfying  \Cref{itm:blocp:pobj}.  
Let  $y_d \in L^2(\domain)$.
We define
$\pttobj: H_0^1(\domain) \to [0,\infty)$ by
$
\pttobj(y) = (1/2)\norm[L^2(\domain)]{\iota_0 y - y_d}^2
$.
The function $\pttobj$ satisfies \Cref{itm:blocp:pobj}
with $\ell = C_\domain^2$
and 
$\varrho(t) = C_\domain^2 t^2 + \norm[L^2(\domain)]{y_d}^2$.

We use two technical facts for our analysis. 
\Cref{lem:selfbounded} establishes a bound on the derivatives
of the objective functions fulfilling \Cref{itm:blocp:pobj}.
\begin{lemma}
	\label{lem:selfbounded}
	If $\domain \subset \real^d$
	is a bounded domain and  \Cref{itm:blocp:pobj} holds, then
	$\norm[H^{-1}(\domain)]{\Du \pttobj(y)}^2 
	\leq 2 \ell \pttobj(y)
	$
	for all $y \in H_0^1(\domain)$.
\end{lemma}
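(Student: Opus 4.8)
The plan is to derive the claimed bound from the standard quadratic upper bound (the ``descent lemma'') for continuously differentiable functions with Lipschitz continuous derivative, combined with the nonnegativity of $\pttobj$. Only the differentiability, the $\ell$-Lipschitz continuity of $\Du\pttobj$, and the property $\pttobj \geq 0$ from \Cref{itm:blocp:pobj} are needed; convexity plays no role.

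First I would fix $y \in H_0^1(\domain)$ and an arbitrary direction $h \in H_0^1(\domain)$, and set $\phi(s) = \pttobj(y + s h)$ for $s \in [0,1]$. Since $\pttobj$ is continuously \frechet\ differentiable, $\phi$ is $C^1$ with $\phi'(s) = \dualpHzeroone{\Du\pttobj(y+sh)}{h}$, and the fundamental theorem of calculus gives
\begin{align*}
	\pttobj(y+h) - \pttobj(y) - \dualpHzeroone{\Du\pttobj(y)}{h}
	= \int_0^1 \dualpHzeroone{\Du\pttobj(y+sh) - \Du\pttobj(y)}{h}\, \du s.
\end{align*}
Bounding the integrand via the dual pairing estimate together with the $\ell$-Lipschitz continuity of $\Du\pttobj$, namely $\norm[H^{-1}(\domain)]{\Du\pttobj(y+sh) - \Du\pttobj(y)} \leq \ell s \norm[H_0^1(\domain)]{h}$, and using $\int_0^1 s\, \du s = 1/2$, yields the descent inequality
\begin{align*}
	\pttobj(y+h) \leq \pttobj(y) + \dualpHzeroone{\Du\pttobj(y)}{h}
	+ \frac{\ell}{2}\norm[H_0^1(\domain)]{h}^2.
\end{align*}
Since $\pttobj \geq 0$ by \Cref{itm:blocp:pobj}, the left-hand side is nonnegative, so $0 \leq \pttobj(y) + \dualpHzeroone{\Du\pttobj(y)}{h} + (\ell/2)\norm[H_0^1(\domain)]{h}^2$ for every $h \in H_0^1(\domain)$. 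I would then minimize the right-hand side over $h$: writing $t = \norm[H_0^1(\domain)]{h}$ and choosing (a minimizing sequence of) directions so that $\dualpHzeroone{\Du\pttobj(y)}{h}$ approaches $-t\,\norm[H^{-1}(\domain)]{\Du\pttobj(y)}$, the estimate reduces to the scalar inequality $0 \leq \pttobj(y) - t\,\norm[H^{-1}(\domain)]{\Du\pttobj(y)} + (\ell/2) t^2$ for all $t \geq 0$. Optimizing the quadratic in $t$ at $t = \norm[H^{-1}(\domain)]{\Du\pttobj(y)}/\ell$ gives $0 \leq \pttobj(y) - \norm[H^{-1}(\domain)]{\Du\pttobj(y)}^2/(2\ell)$, which rearranges to the assertion.

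The only point requiring care is the passage to the infimum over the direction $h$: because the minimization takes place in the infinite-dimensional space $H_0^1(\domain)$, the optimal direction need not be attained. This causes no difficulty, however, since the inequality $0 \leq \pttobj(y) + \dualpHzeroone{\Du\pttobj(y)}{h} + (\ell/2)\norm[H_0^1(\domain)]{h}^2$ holds for \emph{every} $h$, so a minimizing sequence of directions suffices; alternatively, one may pass to the Riesz representative of $\Du\pttobj(y)$ in $H_0^1(\domain)$, for which the dual pairing equals $-t\,\norm[H^{-1}(\domain)]{\Du\pttobj(y)}$ exactly and the infimum is attained. In either case the Lipschitz constant $\ell$ from \Cref{itm:blocp:pobj} enters precisely as the curvature in the descent inequality, and the boundedness hypothesis on $\domain$ is not actually used beyond ensuring that the ambient spaces are the stated Sobolev spaces.
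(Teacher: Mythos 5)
Your proof is correct and takes essentially the same route as the paper: nonnegativity of $\pttobj$ plus the quadratic (descent) upper bound with curvature $\ell$, followed by minimization of the resulting quadratic, which is exactly the paper's argument. The only differences are cosmetic --- the paper cites a textbook result for the descent inequality and works from the start with the Riesz representative $\nabla \pttobj(y) \in H_0^1(\domain)$ (so the minimizer over $\tilde y$ is attained exactly), whereas you derive the inequality from the fundamental theorem of calculus and handle the infimum over directions by a minimizing sequence; your observation that convexity is not actually needed is accurate.
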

\begin{proof}
	Fix $\tilde y$, $y \in H_0^1(\domain)$.
	Let $\nabla \pttobj(y) \in H_0^1(\domain)$ 
	be the Riesz representation of $\Du \pttobj(y)$.
	Using \Cref{itm:blocp:pobj} and 	
	\cite[Cor.\ 3.5.7 and Rem.\ 3.5.2]{Zalinescu2002}, we have
	$
	0 \leq \pttobj(\tilde y) \leq 
	\pttobj(y) +
	\inner[H_0^1(\domain)]{\nabla \pttobj(y)}{\tilde y-y}
	+ (\ell/2)\norm[H_0^1(\domain)]{\tilde y-y}^2
	$.
	Minimizing over $\tilde y \in H_0^1(\domain)$ yields
	$0 \leq \pttobj(y)
	- (1/(2\ell))\norm[H_0^1(\domain)]{\nabla \pttobj(y)}^2$.
\end{proof}

Using \Cref{itm:blocp:domain}
and theorems on the multiplication of Sobolev functions
(see \cite[Thm.\ 7.4]{Behzadan2021} and \cite[Thm.\ 1.4.1.1]{Grisvard2011}),
we can show that the trilinear (pointwise multiplication)  operator
\begin{align}
	\label{eq:multiplication_operator}
	C^{0,1}(\bar{\domain}) \times H^1(\domain) \times H^2(\domain) 
	\ni (g, v, w) 
	\mapsto gvw \in H^1(\domain)
\end{align} 
is  continuous. Hence  it is bounded  \cite[p.\ 68]{Lang1993}.
Therefore, we obtain the existence of a constant 
$C_{\mathrm{tri}} \in (0,\infty)$ such that
for all $(g, v, w) \in 
C^{0,1}(\bar{\domain}) \times H^1(\domain) \times H^2(\domain)$,
\begin{align}
	\label{eq:multiplication_Grisvard2011}
	\norm[H^1(\domain)]{gvw} \leq C_{\mathrm{tri}} 
	\norm[C^{0,1}(\bar{\domain})]{g}
	\norm[H^1(\domain)]{v}
	\norm[H^2(\domain)]{w}.
\end{align}

\subsection{Existence and uniqueness of PDE solutions}
\label{subsec:bilinear:existence}
We show that the PDE \eqref{eq:bleq} has a unique solution 
$S(u,\xi) \in H_0^1(\domain)$
for each $\xi \in  \Xi$ and $u$ in an open neighborhood
of  the set $\adcsp$ in $L^2(\domain)$. 
This allows us to verify 
\Cref{assumption:mcsampling:integrable,assumption:mcsampling:regular,%
	assumption:mcsampling:mi,assumption:mcsampling:spaces}. 
The existence results established in \cite[sect.\ 3.1]{Casas2018a}
are not applicable to our setting, as we consider $L^2(\domain)$
instead of $L^\infty(\domain)$  the control space. 
An approach different from ours to constructing such a
neighborhood is developed in \cite[p.\ 158]{Winkler2020}.

We define 
\begin{align}
	\label{eq:blinear:delta}
	\csp_0 = \bigcup_{u_0\in \adcsp} \, \mathbb{B}_{L^2(\domain)}(u_0;\delta),
	\quad \text{where} \quad 
	\delta = \tfrac{\kappa_{\min}}{2g_{\max}C_{H_0^1; L^4}^2} > 0.
\end{align}
We have  the identity
$\csp_0 = \adcsp + \mathbb{B}_{L^2(\domain)}(u_0;\delta)$.

Fix $(u,\xi) \in \csp_0 \times \Xi$ and $y \in H_0^1(\domain)$.
Since $u \in \csp_0$, there exists $u_0 \in \adcsp$
with $u \in \mathbb{B}_{L^2(\domain)}(u_0;\delta)$. 
Combining \hoelder's inequality and the continuity
of $H_0^1(\domain) \embedding L^4(\domain)$
with the definition of $\delta$ provided in \eqref{eq:blinear:delta}
and $\norm[L^2(\domain)]{u-u_0} \leq \delta$, we obtain
\begin{align*}
	|\inner[L^2(\domain)]{g(\xi) (u-u_0) y}{y}|
	\leq 
	C_{H_0^1; L^4}^2 g_{\max} \norm[L^2(\domain)]{u-u_0}
	\norm[H_0^1(\domain)]{y}^2
	\leq (\kappa_{\min}/2)\norm[H_0^1(\domain)]{y}^2.
\end{align*}
Using $u_0(x) \geq 0$ and $g(\xi)(x) \geq 0$ for a.e.\ $x \in \domain$,
$\inner[L^2(\domain)]{g(\xi) u_0 y}{y} \geq 0$.
It follows that
\begin{align}
	\label{eq:bilinear:zeroterm}
	\begin{aligned}
		& \inner[L^2(\domain)^d]{\rdc(\xi)\nabla y}{\nabla y}
		+\inner[L^2(\domain)]{g(\xi) u y}{y}
		\\
		& \quad = 
		\inner[L^2(\domain)^d]{\rdc(\xi)\nabla y}{\nabla y}
		+\inner[L^2(\domain)]{g(\xi) (u-u_0) y}{y}
		+ \inner[L^2(\domain)]{g(\xi) u_0 y}{y}
		\\
		& \quad  
		\geq \rdc_{\min}\norm[H_0^1(\domain)]{y}^2 -
		(\rdc_{\min}/2)\norm[H_0^1(\domain)]{y}^2 = 
		(\rdc_{\min}/2)\norm[H_0^1(\domain)]{y}^2.
	\end{aligned}
\end{align}
The right-hand side in
\eqref{eq:bleq} defines a continuous bilinear form.
Hence the Lax--Milgram lemma ensures that the
state equation 
\eqref{eq:bleq} has a unique solution $S(u,\xi) \in H_0^1(\domain)$
and yields the stability estimate
\begin{align}
	\label{eq:bilinear:H1SVadcsp}
	\norm[H_0^1(\domain)]{S(u,\xi)} \leq 
	(2/\kappa_{\min}) C_\domain \norm[L^2(\domain)]{\rrhs(\xi)}
	\quad \text{for all} \quad (u,\xi) \in \csp_0 \times \Xi.
\end{align}

We define the parameterized operator 
$E : H_0^1(\domain) \times L^2(\domain) \times \Xi \to H^{-1}(\domain)$ by
\begin{align*}
	%\label{eq:bilinear:E}
	\dualpHzeroone{E(y,u,\xi)}{v}
	= \inner[L^2(\domain)^d]{\rdc(\xi)\nabla y}{\nabla v}
	+\inner[L^2(\domain)]{g(\xi) u y}{v} -
	\inner[L^2(\domain)]{\rrhs(\xi)}{v}.
\end{align*}
The mapping
$E$ is well-defined
and $E(\cdot,\cdot,\xi)$ is infinitely many times continuously
differentiable. 
For each $(y,u,\xi) \in H_0^1(\domain) \times L^2(\domain) \times \Xi$
and $h$, $v \in H_0^1(\domain)$, we have
\begin{align*}
	\begin{aligned}
		\dualpHzeroone{E_y(y,u,\xi)h}{v}
		& = \inner[L^2(\domain)^d]{\rdc(\xi)\nabla h}{\nabla v}
		+\inner[L^2(\domain)]{g(\xi) u h}{v}.
	\end{aligned}
\end{align*}
Together with  \eqref{eq:bilinear:zeroterm}, we obtain
for all $(u,\xi) \in\csp_0 \times \Xi$,
\begin{align*}
	\dualpHzeroone{E_y(S(u,\xi),u,\xi)h}{h}
	\geq (\kappa_{\min}/2) \norm[H_0^1(\domain)]{h}^2
	\quad \text{for all} \quad h \in H_0^1(\domain).
\end{align*}
The Lax--Milgram lemma ensures that 
$E_y(S(u,\xi),u,\xi)$ has a bounded inverse and
\begin{align}
	\label{eq:bilinear:EyboundVadcsp}
	\norm[\spL{H_0^1(\domain)}{H^{-1}(\domain)}]{E_y(S(u,\xi),u,\xi)^{-1}}
	\leq (2/\kappa_{\min})
	\quad \text{for all} \quad (u,\xi) \in \csp_0 \times \Xi.
\end{align}
For each $y$,  $v \in H_0^1(\domain)$
and $u \in L^2(\domain)$, we can show that
$\dualpHzeroone{E(y,u,\cdot)}{v}$ is measurable,
as \Cref{itm:blocp:domain,itm:blocp:kappa,itm:blocp:rrhs} hold.
Since $H^{-1}(\domain)$ is separable, 
$E(y,u,\cdot)$ is measurable \cite[Thm.\ 1.1.6]{Hytoenen2016}.
Hence $S(u,  \cdot)$ is  measurable \cite[Thm.\ 8.2.9]{Aubin2009}.
Combined with \Cref{itm:blocp:pobj}, we find that
$\pttobj(S(u,\cdot))$ is a random variable for each $u\in\csp_0$.

\subsection{PDE regularity}
\label{subsec:bilinear:higherregularity}

We show that the solution $S(u,\xi)$ to 
\eqref{eq:bleq} is contained in $H^2(\domain)$
and derive a stability estimate for 
each $(u,\xi) \in \adcsp \times \Xi$.
The computations performed here form the basis of verifying
\Cref{assumption:mcsampling:subdifferentialidentify}.

Fix  $(u,\xi) \in \adcsp \times \Xi$. Hence $u \in L^\infty(\domain)$.
To establish an $H^2(\domain)$-stability estimate, we 
apply \cite[Thm.\ 3.1]{Teckentrup2013} to the solution
$\widetilde{y} = \widetilde{y}(u,\xi) \in H_0^1(\domain)$ to
\begin{align*}
	\inner[L^2(\domain)^d]{\rdc(\xi)\nabla \widetilde{y}}{\nabla v}
	=
	\inner[L^2(\domain)]{\rrhs(\xi)}{v}-
	\inner[L^2(\domain)]{g(\xi) u S(u,\xi)}{v}
	\quad \text{for all} \quad v \in H_0^1(\domain).
\end{align*}
Since this equation has a unique solution $\widetilde{y}(u,\xi)$ 
and $S(u,\xi)$ is a solution, we have $\widetilde{y}(u,\xi) = S(u,\xi)$.
Defining the random variable
\begin{align}
	\label{eq:Ckappa}
	C_\kappa(\xi) = \frac{\norm[C^0(\bar{\domain})]{\kappa(\xi)}
		\norm[C^1(\bar{\domain})]{\kappa(\xi)}^2}
	{\kappa_{\min}^4},
\end{align}
and applying \cite[Thm.\ 2.1]{Teckentrup2013}, we find that
$S(u,\xi) \in H^2(\domain)$ and
\begin{align*}
	\norm[H^2(\domain)]{S(u,\xi)}
	\leq 
	C_{H^2}C_\kappa(\xi)						
	\Big(			\norm[L^2(\domain)]{g(\xi)uS(u,\xi)}
	+
	\norm[L^2(\domain)]{\rrhs(\xi)}
	\Big),
\end{align*}
where $C_{H^2} = C_{H^2}(\domain) > 0$ is a deterministic constant.
Combined with \hoelder's and \friedrichs\' inequalities,
we obtain for all $(u,\xi) \in \adcsp \times \Xi$,
\begin{align}
	\label{eq:bleq_h2_state}
	\norm[H^2(\domain)]{S(u,\xi)}
	\leq
	C_{H^2}C_\kappa(\xi)					
	\big(		
	C_D	C_\ub g_{\max} 
	\norm[H_0^1(\domain)]{S(u,\xi)}
	+
	\norm[L^2(\domain)]{\rrhs(\xi)}
	\big).
\end{align}

\subsection{Gradient regularity}
\label{subsec:gradientcomputation}
In this section, we show that 
\Cref{assumption:mcsampling:integrable,%
	assumption:mcsampling:regular,%
	assumption:mcsampling:subdifferentialidentify,%
	assumption:mcsampling:subdifferentialbounded}
hold true.
Using the construction of $\csp_0$,  we can define the objective function
$\csp_0 \times \Xi \ni (u,\xi) \mapsto \rpobj_\xi(u) \in [0,\infty)$ by
\begin{align*}
	\rpobj_\xi(u) = \pttobj(S(u,\xi)).
\end{align*}

\Cref{ass:blocp}
ensures that 
$\rpobj_{\xi}$ is continuously differentiable for every $\xi \in \Xi$. Hence
\Cref{assumption:mcsampling:regular} holds true 
\cite[Prop.\ 2.3.6]{Clarke1990}.
The adjoint approach  \cite[sect.\ 1.6.2]{Hinze2009} yields
for each $(u,\xi) \in \csp_0 \times \Xi$,
\begin{align}
	\label{eq:bilinear:gradient}
	\inner[L^2(\domain)]{\nabla \rpobj_{\xi}(u)}{s}
	= \inner[L^2(\domain)]{g(\xi)S(u,\xi)z(u,\xi)}{s}
	\quad \text{for all} \quad s \in L^2(\domain),
\end{align}
where for each $(u,\xi) \in \csp_0 \times \Xi$, the adjoint state
$z = z(u,\xi) \in H_0^1(\domain)$ solves
\begin{align}
	\label{eq:bloc:adjointeq}
	\inner[L^2(\domain)^d]{\rdc(\xi)\nabla z}{\nabla v}
	+\inner[L^2(\domain)]{g(\xi) u z}{v} =
	-\dualpHzeroone{\Du \pttobj(S(u,\xi))}{v}
\end{align}
for all $v \in H_0^1(\domain)$.
Using \eqref{eq:bilinear:EyboundVadcsp} and \eqref{eq:bloc:adjointeq}, 
we obtain the stability estimate
\begin{align*}
	\begin{aligned}
		\norm[H_0^1(\domain)]{z(u,\xi)} 
		&\leq (2/\kappa_{\min}) 
		\norm[H^{-1}(\domain)]{\Du \pttobj(S(u,\xi))}
		\;\; \text{for all} \;\;
		(u,\xi) \in \csp_0 \times \Xi.
	\end{aligned}
\end{align*}
Combined with the stability estimate \eqref{eq:bilinear:H1SVadcsp}, 
\Cref{itm:blocp:pobj}, and \Cref{lem:selfbounded}, 
we have for all $(u,\xi) \in \csp_0 \times \Xi$,
\begin{align}
	\label{eq:bleq_h2_adjoint}
	\begin{aligned}
		\norm[H_0^1(\domain)]{z(u,\xi)} 
		&\leq (2/\kappa_{\min}) 
		\big(2\ell \varrho((2/\kappa_{\min}) C_\domain \rrhs_{\max})\big)^{1/2}.
	\end{aligned}
\end{align}

We show that \Cref{assumption:mcsampling:integrable} 
is satisfied.
Using the \hoelder\ and  \friedrichs\  inequalities, the continuity
of $H_0^1(\domain) \embedding L^4(\domain)$, the stability estimates
\eqref{eq:bilinear:H1SVadcsp} and \eqref{eq:bleq_h2_adjoint}, 
and the gradient formula \eqref{eq:bilinear:gradient}, we have
$(u,\xi) \in \csp_0 \times \Xi$,
\begin{align*}
	\norm[L^2(\domain)]{\nabla \rpobj_{\xi}(u)}
	& \leq C_{H_0^1; L^4}^2 g_{\max}
	\norm[H_0^1(\domain)]{S(u,\xi)}
	\norm[H_0^1(\domain)]{z(u,\xi)}
	\\
	& \leq 
	C_{H_0^1; L^4}^2 g_{\max}
	(2/\kappa_{\min})^2 C_\domain \rrhs_{\max}
	\big(2\ell \varrho((2/\kappa_{\min}) C_\domain \rrhs_{\max})\big)^{1/2}.
\end{align*}
Right-hand side in 
the above equation is a deterministic 
constant. Hence \Cref{assumption:mcsampling:integrable}  holds true.

The stability estimate
\eqref{eq:bilinear:H1SVadcsp} and \Cref{itm:blocp:pobj} yield
for all $(u,\xi) \in \csp_0 \times \Xi$,
\begin{align}
	\label{eq:bilinear:rpobjbound}
	\rpobj_{\xi}(u)
	\leq \varrho((2/\kappa_{\min}) C_\domain \rrhs_{\max}).
\end{align}
Combined with the measurability of
$\pttobj(S(u,\cdot))$  for each $u\in\csp_0$, 
we find that \Cref{assumption:mcsampling:mi} holds true.

We verify \Cref{assumption:mcsampling:subdifferentialidentify}
by showing that for each $(u,\xi) \in \adcsp \times \Xi$,
\begin{align}
	\label{eq:2021-06-09T17:00:47.21}
	\nabla \rpobj_{\xi}(u) = \iota_1 m_\xi(u),
	\quad \text{where} \quad   m_\xi(u) =   g(\xi)S(u,\xi)z(u,\xi).
\end{align}
We define $M_\xi(u) = \{\, m_\xi(u)\, \}$, $V = H^1(\domain)$, 
and $\coperator = \iota_1$.
Fix $(u,\xi) \in \adcsp \times \Xi$. 
We have $S(u,\xi) \in   H^2(\domain)$
(see \cref{subsec:bilinear:higherregularity} and \eqref{eq:bleq_h2_state})
and $z(u,\xi) \in H^1(\domain)$ (see \eqref{eq:bleq_h2_adjoint}).
Combined with $g(\xi) \in C^{0,1}(\bar{\domain})$
and \eqref{eq:multiplication_Grisvard2011},
we find that $m_\xi(u)\in H^1(\domain)$.
Hence  \eqref{eq:bilinear:gradient} implies
\eqref{eq:2021-06-09T17:00:47.21}.

We show that \Cref{assumption:mcsampling:subdifferentialbounded} 
holds true using \Cref{lem:mcsampling:subdifferentialbounded}. 
Fix $(u,\xi) \in \adcsp \times \Xi$. Using 
\eqref{eq:multiplication_Grisvard2011}
and \eqref{eq:2021-06-09T17:00:47.21}, we have
\begin{align*}
	\norm[H^1(\domain)]{m_\xi(u)}
	\leq 
	C_{\mathrm{tri}} g_{\max}
	\norm[H^2(\domain)]{S(u,\xi)} \norm[H^1(\domain)]{z(u,\xi)}.
\end{align*}
Combined with the stability estimates 
\eqref{eq:bilinear:H1SVadcsp},
\eqref{eq:bleq_h2_state}, and
\eqref{eq:bleq_h2_adjoint}, and 
$\norm[H^1(\domain)]{y} \leq (1+C_\domain) \norm[H_0^1(\domain)]{y}$
valid for all $y \in H_0^1(\domain)$, 
we find that
\begin{align*}
	\norm[H^1(\domain)]{m_\xi(u)}
	& \leq C_{\mathrm{tri}} g_{\max}
	C_{H^2}C_\kappa(\xi)					
	\big(		
	C_D^2	C_\ub g_{\max}
	(2/\kappa_{\min})  \rrhs_{\max}
	+
	\rrhs_{\max}
	\big)
	\\
	& \quad \cdot  (2/\kappa_{\min}) C_\domain (1+C_\domain)
	\big(2\ell \varrho((2/\kappa_{\min}) C_\domain \rrhs_{\max})\big)^{1/2}.
\end{align*}
We define $\zeta$ as the right-hand side in the above equation.
Using \Cref{itm:blocp:kappa} and 
\eqref{eq:Ckappa}, we find that $\zeta$ is integrable.
Hence \Cref{lem:mcsampling:subdifferentialbounded} yields
\Cref{assumption:mcsampling:subdifferentialbounded}.

\section{Discussion}
\label{sec:conclusion}
A multitude of applications in science and engineering 
yield  PDE-constrained optimization problems
under uncertainty. Motivated by such applications, we have considered 
infinite-dimensional stochastic optimization problems. We approximated
the expectations in the stochastic programs using two approaches:
the SAA approach and using probability measures weakly converging
to the random element's probability
distribution. For both approximation approaches, we established
asymptotic consistency statements for C-stationary points
under Clarke regularity of integrands. We applied our framework
to a risk-averse semilinear PDE-constrained optimization problem
with the average value-at-risk as a risk measure,
and to risk-neutral bilinear PDE-constrained optimization.

Our consistency analysis requires integrands be Clarke regular.
For many applications in PDE-constrained optimization, integrands
are continuously differentiable and hence Clarke regular. 
For nonregular integrands, consistency analysis of SAA 
first-order stationary points is more demanding. 
Using  smoothing functions of integrands, 
the SAA approach has been analyzed in \cite{Burke2020,Xu2009}
as the sample size approaches infinity and smoothing parameters
converge to zero.
We refer the reader to \cite{Qi2022} for recent contributions to the
consistency analysis of SAA stationary points as applied to risk-neutral
optimization problems with  integrands that lack Clarke regularity.

Our main result, \Cref{thm:consistencymc}, 
establishes the consistency of 
SAA C-stationary points of infinite-dimensional risk-neutral optimization problems. 
While applicable to a large set of applications, 
it may be desirable to establish consistency results for 
sample-based approximations of
chance-constrained problems governed by PDEs with random inputs
\cite{Chen2020,FarshbafShaker2020,Tong2022}, and
PDE-constrained problems under uncertainty with state constraints
\cite{Geiersbach2021b,Kouri2023}. Further future
work includes extending  the consistency analysis
to a broader class of 
risk-averse PDE-constrained optimization problems.

\section*{Acknowledgments}
JM is very grateful to 
Professor Alexander Shapiro for taking the time to 
address several of my questions related to the SAA approach.

{
\footnotesize
\bibliography{JMilz_bilinear_epi.bbl}
}

\end{document}